\newcommand{\eps}{\varepsilon}
\newcommand{\h}{\mathcal{H}}
\newcommand{\Amc}{\mathcal{A}}
\newcommand{\Bmc}{\mathcal{B}}
\newcommand{\Cmc}{\mathcal{C}}
\newcommand{\Dmc}{\mathcal{D}}
\newcommand{\A}{\mathcal{A}}
\newcommand{\B}{\mathcal{B}}
\newcommand{\C}{\mathcal{C}}
\newcommand{\F}{\mathcal{F}}
\newcommand{\Fg}{\mathbb{F}}
\newcommand{\J}{\mathcal{J}}
\newcommand{\W}{\mathcal{W}}
\newcommand{\BOP}{\mathbf{B}}
\newcommand{\LH}{\mathbf{L}(\mathcal{H})}
\newcommand{\BH}{\mathbf{B}(\mathcal{H})}
\newcommand{\Complex}{\mathbb{C}}
\newcommand{\abs}[1]{\left\vert#1\right\vert}
\newcommand{\set}[1]{\left\{#1\right\}}
\newcommand{\seq}[1]{\left<#1\right>}
\newcommand{\norm}[1]{\left\Vert#1\right\Vert}
\newcommand{\lz}{l^1(\mathbb{Z})}
\DeclareMathOperator{\diag}{diag} 
\DeclareMathOperator{\D}{D^*} \DeclareMathOperator{\rad}{Rad}
\newcommand{\algebra}[1]{\Complex\langle #1 \rangle}
\newtheorem{theorem}{Theorem}
\newtheorem*{thmp}{Theorem (V. Paulsen)}
\newtheorem{lemma}[theorem]{Lemma}
\newtheorem{proposition}[theorem]{Proposition}
\newtheorem{corollary}[theorem]{Corollary}
\newtheorem{definition}[theorem]{Definition}
\newtheorem{example}[theorem]{Example}
\newtheorem{remark}[theorem]{Remark}
\begin{document}
\title{$*$-Doubles and embedding of associative algebras in $\BH$. }
\author{Stanislav Popovych}
\date{}
\maketitle

 \footnotetext{ 2000 {\it Mathematics Subject
Classification}: 46L07, 46K50 (Primary) 16S15, 46L09, 16W10
(Secondary) }

\begin{abstract}

We prove that an associative algebra $\A$ is isomorphic to a
subalgebra of a  $C\sp*$-algebra  if and only if its $*$-double $\A
 * \A^*$ is $*$-isomorphic to a $*$-subalgebra of a
$C\sp*$-algebra. In particular each operator algebra is shown to be
completely boundedly isomorphic to an operator algebra $\B$ with the
greatest $C\sp*$-subalgebra consisting  of the  multiples of the
unit and such that each element in $\B$ is determined by its module
up to a scalar multiple. We also study the maximal subalgebras of an
operator algebra $\A$ which are mapped into $C\sp*$-algebras under
completely bounded faithful representations of $\A$.

\medskip\par\noindent
KEYWORDS:  $*$-algebra, Hilbert space,  operator algebra,
$C\sp*$-algebra, completely bounded homomorphism, reducing ideal,
embedding.
\end{abstract}

\section{Introduction}

There  are  well-know characterizations   of Banach algebras
bi-continuously isomorphic to  closed subalgebras in the algebra
$\BH$ of bounded linear operators on Hilbert space $\h$ due to
Varopoulos~\cite{Varopoulos} and  P. G. Dixon~\cite{Dixon}. In
sequel subalgebras of $\BH$ will be called {\it operator algebras}.

With an advent of operator space theory a useful characterization of
operator algebras was given by Blecher, Ruan ans
Sinclair~\cite{BRS}. We will collect  some necessary definitions and
facts from the theory below.
 Let $\mathcal{B}$ be a unital operator algebra in $\BH$.
 The algebra $M_{n}(\BH)$ of $n\times n$ matrices with entries in
$\BH$ has a norm $\|\cdot\|_{n}$ via the identification of
$M_n(\BH)$ with $B(\h^n)$, where $\h^n$ is the direct sum of $n$
copies of a Hilbert space $\h$. The algebra  $M_n(\B)$ inherits a
norm $\|\cdot\|_n$ via natural inclusion into $M_n(\BH)$. The  norms
$ \|\cdot\|_n $  are called matrix norms on the operator algebra
$\B$. If $\phi\colon \B \to \B_1$ is a linear bounded map between
two operator algebras then $\phi^{(n)} = id\otimes \phi$ maps
$M_n(\B)$ into $M_n(\B_1)$ and $\norm{\phi}_{cb}=\sup\limits_n
\norm{\phi^{(n)}}$ is called {\it completely bounded norm} of
$\phi$.
 The map $\phi$ is called {\it completely bounded}  if $\norm{\phi}_{cb}<\infty$.

An abstract operator algebra  $\A$ is an associative algebra with a given collection of norms
$\norm{\cdot}_{m,n}$ on $M_{m,n}(\mathcal{A})$  satisfying certain axioms (see~\cite{BRS}).
 The BRS Theorem states that every unital abstract operator algebra
 with the unit
of norm 1 is completely isometric to an operator algebra in $\BH$.

For non-normed associative algebras no characterization seems to be
known. Not every associative algebra is isomorphic to a subalgebra
of a Banach algebra. To see this consider the quotient $\A$ of the
free algebra on two generators $x$ and $y$ by the ideal generated by
$x y -yx -x$. Then for every $n\ge 1$ we will have the relation $x^n
y - y x^n = n x^n$. Thus for every algebra norm on $\A$ we will have
$n \norm{x^n} \le 2 \norm{x^n}\norm{y}$. Hence $x^n$ should be zero
for some $n$ which  can be easily checked to be false.  This
contradiction shows that $\A$ can not be embedded into a Banach
algebra. The algebra  $\BOP(\lz)$ gives an example of an algebra
which admits a norm but which is not embeddable in $\BH$. A proof
can be deduced  from the well know fact that all subalgebras of
$\BH$ are Arens regular.

For $*$-algebras there are several criteria of representability in
$\BH$ and a number of sufficient conditions (see~\cite{Palmer2,
Popovych}). We will recall some of them in Section 3. In present
paper we propose a functor $\D$ from the category of associative
algebras with homomorphisms to the category of $*$-algebras with
$*$-homomorphisms. The functor $\D$ maps an associative algebra to
its $*$-double $\A * \A^*$ (see Section 2 for more details).  This
functor has been used in~\cite{BlecMod} to define maximal
$C\sp*$-algebra of an operator algebra $\A$. To be more precise, to
each completely contractive homomorphism $\theta\colon \A \to \BH$
there corresponds a $*$-homomorphism $\theta * \theta^*$ of $\A *
\A^*$ into $\BH$. Let $\J$ denote the intersection of kernels of all
$\theta * \theta^*$.  One can obtain a $C\sp*$-algebra by completing
$\A * \A^*/ \J$ with respect to the $C\sp*$-norm $\norm{x} =
\sup\norm{ \theta * \theta^*(x)}$. This $C\sp*$-algebra denoted by
$C_{max}\sp*(\A)$ is called the maximal $C\sp*$-algebra of an the
operator algebra $\A$. It should be noted that canonical map of $\A
* \A^*$ into $C_{max}\sp*(\A)$ factors trough the  canonical map
$\pi$ of $\A * \A^*$ into $\A *_{\Delta} \A^*$, the free product
amalgamated over the diagonal subalgebra $\A\cap \A^*$. Thus $\J$
contains $\ker \pi$. No conditions are known which ensure that $\J$
is trivial (however some description of $\J$ can be found
in~\cite{Duncan}). We will prove that if in the above construction
$\theta$ is not required to be completely contractive but only
completely bounded with $\norm{\theta}_{cb}\le 1+\epsilon$  then
$\J$ is always trivial (see Theorem~\ref{main}).

The main property of the functor $\D$ established in the present
paper is that an algebra $\A$ is isomorphic to a subalgebra in $\BH$
if and only if $\D(\A)$ is isomorphic to a $*$-algebra in $\BH$.  We
also give several applications of the representability result
 in the theory of operator algebras. For example
we show that each operator algebra $\A$ is completely boundedly
isomorphic to a concrete operator algebra $\B$ with the property
that each element $x$ is determined by its module $(x^*x)^{1/2}$ up
to a scalar multiple and such that $\B\cap \B^*= \Complex e$. Here
$\B^*$ is the set of adjoint operators to those in $\B$. The
subalgebra $\B\cap \B^*$ depends on the embedding of $\B$ into $\BH$
and could be characterized as a maximal $C\sp*$-subalgebra of $\B$.
If we consider all cb embeddings with cb inverses we obtain a family
of subalgebras of $\B$ of the form $\B\cap\B^*$. The above result
shows
 that the minimal subalgebra in this family is $\Complex$. In the last
section we study the opposite question, i.e. what are the maximal
 subalgebras in this family.

\section{Algebraic properties of $*$-Doubles.}

All algebras in the paper will be assumed unital and the units will
be denoted by $e$. Let $\mathcal{A}$ be an associative algebra (over
the field of complex numbers).   There exists an associative algebra
$\mathcal{A}^*$ and anti-isomorphism $\phi: \mathcal{A}\to
\mathcal{A}^*$, i.e. $\phi$ is an additive bijection and for all $a,
b\in \mathcal{A}$ and $\lambda\in \mathbb{C}$, $\phi(a b) = \phi(b)
\phi(a)$ and $\phi(\lambda a) = \overline{\lambda} a$. It is easy to
see that $\mathcal{A}^*$ is unique up to isomorphism. The existence
of $\A^*$ follows from the following construction. Consider any
co-representation $\mathcal{A} = \mathcal{F}(X)/\mathcal{I}$ where
$\mathcal{F}(X)$ is the free associative algebra with a generating
set $X$. The algebra $\mathcal{F}(X)$ is a subalgebra in the free
$*$-algebra on the generating set $X\cup X^*$. The involution of the
free  $*$-algebra maps ideal $\mathcal{I}$ to an ideal
$\mathcal{I}^*$ contained in $\mathcal{F}(X^*)$ and  the quotient
algebra $\mathcal{F}(X^*)/\mathcal{I}^*$ is clearly anti-isomorphic
to $\mathcal{A}$.

\begin{definition} The   $*$-double of $\A$, denoted by
$\D(\mathcal{A})$, is the free product $\mathcal{A}*\mathcal{A}^*$
with an involution defined on the generators by the rule $a^*=
\phi(a)$, $b^* = \phi^{-1}(b)$ where $a\in \mathcal{A}$ and $b\in
\mathcal{A}^*$ and $\phi: \mathcal{A}\to \mathcal{A}^*$ is a fixed
anti-isomorphism.
\end{definition}

It is easy to show that $*$-double of $\A$ is unique up to
$*$-isomorphism.  In sequel we will reserve  the notation  $\A*\B$
for the free product in the category of associative algebras and
will use $\A\star\B$ to denote the free product in the category of
$*$-algebras.

The $*$-double of $\mathcal{A}$ has the following universal
property. There are injective homomorphisms $i: \mathcal{A}\to
\D(\mathcal{A})$ and $j: \mathcal{A}^*\to \D(\mathcal{A})$ such that
for any $*$-algebra $\mathcal{B}$ and any homomorphism
$\pi:\mathcal{A} \to \mathcal{B}$ there exists a unique
$*$-homomorphism $\hat{\pi}: \D(\mathcal{A})\to \mathcal{B}$ such
that the following diagram is commutative:

%\begin{diagram}
%\mathcal{A} &\rTo^i    &D^*(\mathcal{A}) &\lTo^j   &\mathcal{A}^*\\
%            &\rdTo_\pi &\dTo^{\hat{\pi}} &\ldTo^{\pi^*}\\
%            &          &\mathcal{B}
%\end{diagram}

%\begin{equation}
%\Diagram
%\mathcal{A} &\rTo^{i}    &D^*(\mathcal{A}) &\lTo^{j}   &\mathcal{A}^*\\
%            &\rdTo_{\pi} &\dTo^{\hat{\pi}} &\ldTo^{\pi^*}\\
%            &          &\mathcal{B}\\
%\endDiagram
%\end{equation}

\begin{equation}\label{diag}
\xymatrix{ \mathcal{A} \ar@{^{(}->}[r]^{i \quad } \ar[dr]_{\pi} &
\D(\mathcal{A}) \ar@{-->}[d]^{\hat{\pi}} & \mathcal{A}^*
\ar@{_{(}->}[l]_{ \quad  j} \ar[dl]^{\pi^*} \\  & \mathcal{B} }
\end{equation}

Here $\pi^*$ denotes $(\pi\circ \phi^{-1})^*$. By the universal
property of the free product every homomorphism $\rho$ between the
algebras $\A$ and $\B$ defines the homomorphism $\rho * \rho^*\colon
\A
* \A^* \to  \B * \B^*$ which coincides with $\rho$ when restricted
to the first factor and with $\rho^* = \psi\circ \rho\circ
\phi^{-1}$ when restricted to the second  one. Here $\phi\colon \A
\to \A^*$ and $\psi\colon \B\to \B^*$ are fixed anti-isomorphisms.
Below we will collect a few simple properties of $\D$.

\begin{proposition}\label{prop} Let $\A$ and $\B$ be associative algebras.
Then the following properties hold.
\begin{enumerate}
\item  If $\rho: \mathcal{A} \to \mathcal{B}$ is a homomorphism
 then $\rho * \rho^* : \D(\Amc ) \to \D(\Bmc)$ is a
$*$-homomorphism which is injective provided $\rho$ is such.
 Thus $\rho \to \rho *\rho^*$ can be considered as an  action of
the functor $\D$ on the morphisms.
\item  If $\A$ is a subalgebra in $\Cmc$ then $\D(\A)$ is a
$*$-subalgebra in $\D(\C)$.
\item   Decompose  $\A
= \Complex e \oplus \A_0$ as a direct sum of vector spaces where
 $\A_0$ is a  subspace in $\A$. Then $\A^* = \Complex\oplus \A_0^*$ and
$\D(\A)$ as a vector space is
\begin{equation} \mathbb{C}\oplus \Amc_0 \oplus \Amc_0^*\oplus
(\Amc_0\otimes \Amc_0^*) \oplus (\Amc_0^*\otimes \Amc_0) \oplus
(\Amc_0 \otimes \Amc_0^* \otimes \Amc_0)\oplus \ldots \label{Fock}
\end{equation}
\item  $\D(\A)$ is $*$-isomorphic to $\D(\A^*)$.
\item  $\D(\D(\A))$ is $*$-isomorphic to $\D(\A) \star \D(\A^*)$.
\end{enumerate}
\end{proposition}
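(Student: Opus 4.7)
The plan is to drive everything from two ingredients: the universal property of the $*$-double encoded in diagram~\eqref{diag}, and the normal-form theorem for the free product of two unital algebras amalgamated over their common unit. I would first establish (3), the technical core, and then deduce the remaining parts formally.

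For (3), write $\A = \Complex e \oplus \A_0$; applying the anti-isomorphism $\phi$ gives $\A^* = \Complex e \oplus \A_0^*$. Then $\D(\A) = \A * \A^*$ is the free product of two unital algebras with identified units, and the classical normal-form theorem for such free products yields precisely the Fock-like decomposition~\eqref{Fock} as a direct sum of $\Complex e$ with the alternating tensor products of copies of $\A_0$ and $\A_0^*$. I would prove this by checking that the obvious multiplication map from the Fock sum to $\D(\A)$ is surjective, and constructing a one-sided inverse: use the universal property of the free product to represent $\D(\A)$ on the Fock sum by left multiplication, which witnesses the linear independence of the alternating tensor summands.

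For (1), compose $\rho$ with the inclusion $i_\B \colon \B \hookrightarrow \D(\B)$ and invoke the universal property of $\D(\A)$ to obtain a unique $*$-homomorphism $\D(\A) \to \D(\B)$ extending $i_\B \circ \rho$; by uniqueness this is $\rho * \rho^*$. Injectivity when $\rho$ is injective then follows from (3): the map carries each Fock summand of $\D(\A)$ into the corresponding summand of $\D(\B)$ as a tensor product of copies of $\rho|_{\A_0}$ and $\rho^*|_{\A_0^*}$, both of which are injective (the second because $\rho^* = \psi \circ \rho \circ \phi^{-1}$), and distinct summands of the target are linearly independent. Part (2) is the special case with $\rho$ the inclusion $\A \hookrightarrow \C$.

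For (4), the flip $\A * \A^* \to \A^* * \A$ is an algebra isomorphism; identifying $(\A^*)^* = \A$ via $\phi^{-1}$ as the defining anti-isomorphism, the involution of $\D(\A^*) = \A^* * \A$ sends $b \in \A^*$ to $\phi^{-1}(b) \in \A$ and $a \in \A$ to $\phi(a) \in \A^*$, which matches the involution of $\D(\A)$ under the flip. For (5), I would compare universal properties: a $*$-homomorphism $\D(\D(\A)) \to X$ into any $*$-algebra $X$ is equivalent to an algebra homomorphism $\D(\A) \to X$, which in turn unpacks to an independent pair of algebra homomorphisms $\A \to X$ and $\A^* \to X$; a $*$-homomorphism $\D(\A) \star \D(\A^*) \to X$ unpacks, by the universal properties of the $*$-free product and of each $\D$, to exactly the same data. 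Yoneda then produces the canonical $*$-isomorphism. The main obstacle throughout is the injectivity half of (3), a standard but nontrivial normal-form statement; granted (3), the remaining parts are formal consequences of diagram~\eqref{diag}.
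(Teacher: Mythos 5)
Your proposal is correct, and it actually supplies far more detail than the paper, which dismisses parts (1)--(4) as ``straightforward'' and proves only part (5). The genuine difference is in (5): the paper fixes a presentation $\A = \algebra{x \mid p_\beta(x)=0}$ and computes both $\D(\D(\A))$ and $\D(\A)\star\D(\A^*)$ as algebras given by generators and relations, checking that the two presentations literally coincide; you instead observe that both objects represent the same functor $X \mapsto \mathrm{Hom}_{alg}(\A,X)\times\mathrm{Hom}_{alg}(\A^*,X)$ on the category of $*$-algebras and invoke Yoneda. Your route is coordinate-free and makes the ``why'' transparent; the paper's route is computational but produces an explicit matching of generators, in the same presentation-theoretic style it reuses later (e.g.\ in Lemma~\ref{alg} and Proposition~\ref{square}), so each has its merits. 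Your treatment of (3) by the standard normal-form argument for unital free products (surjectivity of the multiplication map plus a left-regular representation on the Fock sum to witness independence) is the standard correct proof, and (2) and (4) follow as you say. One small repair is needed in your injectivity argument for (1): for a fixed, arbitrary decomposition $\B = \Complex e \oplus \B_0$ it is \emph{not} true that $\rho*\rho^*$ carries Fock summands of $\D(\A)$ into the corresponding summands of $\D(\B)$, because $\rho(\A_0)$ need not lie in $\B_0$ (already $\rho(x)=x+e$ in $\Complex[x]$ breaks this), so the image of a word scatters across summands of lower length. Since (3) holds for \emph{every} choice of complement, the fix is to choose $\B_0$ containing $\rho(\A_0)$; this is possible precisely because $\rho$ is injective and unital, whence $\rho(\A_0)\cap\Complex e=\{0\}$, and with that compatible choice your tensor-product-of-injections argument goes through verbatim.
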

\begin{proof}
Statements 1-4 are straightforward.  To show the last one  we will
represent $\A$ by generators and relations
$$\algebra{\set{x_\alpha}_{\alpha\in \Lambda} |
\set{p_\beta(x)=0}_{\beta\in \Delta}}.$$ Here $p(x)$ denote an
element of the free algebra $\F(\set{x_\alpha})$. For another
alphabet $y=\set{y_\alpha}$ indexed by the same set $\Lambda$ we
will denote by $p_\beta(y)$ the element of $\F(\set{y_\alpha})$
obtained by substitution $y_\alpha$ for $x_\alpha$ in $p_\beta(x)$.
For simplicity we will write $x= \set{x_\alpha}_{\alpha\in \Lambda}$
and $y=\set{y_\alpha}_{\alpha\in \Lambda}$.  In these notations
$\D(\A) = \algebra{x, x^*| p_\beta(x) = 0, p_\beta(x)^* = 0}$, and
\begin{eqnarray*}\D(\D(A))  &=& \algebra{x, y, x^*, y^*| p_\beta(x)
= 0, p_\beta(y^*)^* = 0, p_\beta(x)^*=0, p_\beta(y^*)^{**}=0}\\ &=&
\algebra{x, x^* | p_\beta(x) =0, p_\beta(x^*)=0}\star \algebra{y,
y^* | p_\beta(y^*) =0, p_\beta(y^*)^*=0}\\ &=& \D(\A) \star
\D(\A^*).
\end{eqnarray*}
\end{proof}

Fix some decomposition $\A
= \Complex e \oplus \A_0$ as in Proposition~\ref{prop}.
 The elements $c_1 c_2 \ldots
c_n$, where $c_j\in \Amc_0 \cup \Amc_0^*$, and such that $c_j\in
\Amc_0$ iff $c_{j+1}\in \Amc_0^*$ for all $j$ correspond to
elementary tensors $c_1\otimes \ldots \otimes c_n$ in decomposition
(\ref{Fock}) and will be called words.

 Fix a linear basis $\set{e_\lambda}_{\lambda \in \Lambda}$
in $\Amc_0$ (we will assume that $0$ lies in $\Lambda$ and $e_0=1$ and that
$\Lambda$ is well-ordered). The set $\Omega$ of words
\begin{equation}\label{basis} \omega= e_{i_1} e_{j_n}^* e_{i_2}
e_{j_{n-1}}^*\ldots e_{i_n} e_{j_1}^*
\end{equation}
where $i_k\not =0$ and $j_k\not= 0$ for $k\not= 1$ is a
linear basis for $\D(\Amc)$. The number $l(\omega)$ of factors not equal to $e_0$
will be called  the length of $\omega$.
 We can consider
$\Omega$ to be well-ordered by introducing homogenous lexicographic
ordering $\succeq$ on the set of multi-indices $(i_1j_n\ldots
i_nj_1)$.

Thus  arbitrary $x\in \D(\Amc)$ is a finitely supported sum $x=
\sum_{\omega\in \Omega} \alpha_\omega  \omega$ ($\alpha_\omega\in
\mathbb{C}$).
 We can define the leading monomial to be $\lambda_{\omega_0} \omega_0$ where
$\omega_0 \succeq \xi$ for all $\xi\in \Omega$ with
$\alpha_\xi\not=0$ and the leading coefficient to be
$\alpha_{\omega_0}$. Define the degree $\deg(x)$  of $x$ to be  $
l(\omega_0)$. For every $\omega\in \Omega$ we have $\omega
\omega^*\in \Omega$. This fact will be frequently used in sequel.
 Recall the following definition from~\cite{Palmer2}.
\begin{definition}
A $*$-algebra $\B$ is called ordered if for every $n$ the equation
$\sum\limits_{i=1}^n x_i x_i^* =0$ where $x_i\in \B$ have only
trivial solution $x_1= \ldots=x_n=0$.
\end{definition}

%  \begin{proposition} For any algebra $A$ its $*$-double $\D(\Amc)$     is    ordered.
%  \end{proposition}
%  \begin{proof}
%  It is easy to see that for any $\omega\in \Omega$, $\omega \omega^*
%  \in \Omega$. Denote $ F(x) = \sum \alpha_{\omega}$ where the sum is
% taken over all $\omega$ of maximal length, i.e.  $l(\omega)
%  =\deg(x)$ of the form $\xi \xi^*$. We  have that $F(x x^*) =
% \sum\limits_{l(\omega) = \deg(x)} \abs{\alpha_\omega}^2$. Applying
% $F$ to the both sides of the equation $\sum\limits_{i=1}^n x_i x_i^*
%  =0$ we get $x_1= \ldots=x_n=0$.
%  \end{proof}

 We will prove below that all partial isometries in $\D(\A)$ are
scalar multiples of the unit. This fact will require the use of
non-commutative Gr\"obner basis theory. We refer the reader to the
appendix for necessary definitions and facts from this theory.
\begin{proposition}\label{square}
Let $\A$ be an associative algebra with the unit $e$ and  $\B=
\D(\A)$. Then the following statements hold.
\begin{enumerate}
\item  The $*$-algebra $\B$ is ordered.
\item $\B$ contains no non-scalar algebraically bounded element,
i.e. equality $\sum_{j=1}^n x_j^* x_j = e$ for some $x_j\in \B$
implies that $x_j\in \Complex e$.
\item If $x\in \A$ and $y\in \B$ are such that
$$ x^* x =y^*y$$ then $x= \lambda y$ where $\lambda\in \Complex$,
$\abs{\lambda} = 1$.
\end{enumerate}

\end{proposition}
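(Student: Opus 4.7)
The plan is to work throughout with the explicit basis $\Omega$ from (\ref{basis}) and the Fock-type decomposition (\ref{Fock}), exploiting the filtration by word-length. The key combinatorial fact is that if $\omega,\xi \in \Omega$ are basis words of length $N$ that end in the same type (both in $\A_0$ or both in $\A_0^*$), then $\omega\xi^*$ is already a basis word in $\Omega$ of length $2N$ and the correspondence $(\omega,\xi)\mapsto \omega\xi^*$ is injective; the analogous statement holds for $\omega^*\xi$ when the starting types match.

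For (1), suppose $\sum_i x_i x_i^* = 0$ with some $x_i\neq 0$ and let $N=\max_i\deg(x_i)$. Any product of basis words of length $\le N$ has length $\le 2N$, with equality only when both factors have length $N$ and no reduction occurs at the join, so the degree-$2N$ summand of the equation receives contributions only from the top-degree parts $(x_i)_N$, and within those, only from pairs of basis words whose ending types agree. Splitting $(x_i)_N = a_i + b_i$ by ending type then yields two separate equations $\sum_i a_ia_i^* = 0$ and $\sum_i b_ib_i^* = 0$ in disjoint subspaces of degree $2N$ (the corresponding basis words differ at the central positions $N$ and $N+1$). Expanding $a_i = \sum_\omega c_{i,\omega}\,\omega$ and using injectivity of $(\omega,\xi)\mapsto\omega\xi^*$, the coefficient of the distinguished word $\omega\omega^*$ in $\sum_i a_ia_i^*$ is $\sum_i|c_{i,\omega}|^2$, which must vanish, forcing $c_{i,\omega}=0$; the same argument kills $b_i$. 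Hence $(x_i)_N=0$ for every $i$, contradicting the choice of $N$.

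For (2), assume some $x_j$ satisfies $\deg(x_j)\ge 1$ and set $N=\max_j\deg(x_j)\ge 1$; since $e$ is in degree $0$ and $2N>0$, the degree-$2N$ component of $\sum_j x_j^*x_j$ vanishes, and the symmetric argument (splitting by \emph{starting} type and using $(\omega,\xi)\mapsto\omega^*\xi$) forces $(x_j)_N=0$, a contradiction. Statement (3) begins by applying this same symmetric argument to the single element $y$ to conclude $\deg(y)\le 1$; writing $y = \beta e + y_1^{(0)} + y_1^{(*)}$ with $y_1^{(0)}\in\A_0$ and $y_1^{(*)}\in\A_0^*$, the degree-$2$ part of $y^*y$ equals $y_1^{(0)*}y_1^{(0)} + y_1^{(*)*}y_1^{(*)}$, living in the $\A_0^*\otimes\A_0$ and $\A_0\otimes\A_0^*$ summands respectively. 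Since $x=\alpha e+x_0\in\A$ has $x^*x$ with degree-$2$ part $x_0^*x_0\in\A_0^*\otimes\A_0$ only, matching the $\A_0\otimes\A_0^*$ component forces $y_1^{(*)*}y_1^{(*)}=0$, whence $y_1^{(*)}=0$ by (1), so $y\in\A$. Matching the $\Complex e$, $\A_0$, and degree-$2$ components of $x^*x=y^*y$ yields $|\alpha|=|\beta|$, $\bar\alpha x_0=\bar\beta y_0$ (writing $y=\beta e+y_0$), and $x_0^*x_0 = y_0^*y_0$; writing $x_0=\sum c_i e_i$ and $y_0=\sum d_i e_i$, the last equation becomes $\bar c_i c_j = \bar d_i d_j$ for all $i,j$, and an elementary rank-one matrix argument concludes $y_0=\lambda x_0$ with $|\lambda|=1$, from which $x=\bar\lambda y$.

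The main technical obstacle is the bookkeeping of word types: one needs to check that cross-terms $a_ib_j^*$ with mismatched ending types indeed drop to degrees strictly below $2N$ (they reduce through the multiplication in $\A_0\cdot\A_0$ or $\A_0^*\cdot\A_0^*$), and that the surviving diagonal pieces $a_ia_i^*$ and $b_ib_i^*$ occupy disjoint subspaces of the degree-$2N$ component so that the two sub-equations genuinely decouple. Once this type-analysis is in place, extracting the coefficient of $\omega\omega^*$ (respectively $\omega^*\omega$) reduces everything to the positivity of $\sum_i|c_{i,\omega}|^2$.
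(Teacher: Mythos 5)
Your proof is correct. For statements (1) and (2) it is in substance the paper's own argument: expand in the alternating-word basis (\ref{basis}), note that the top-degree diagonal products $\omega\omega^*$ (resp.\ $\omega^*\omega$) are again basis words, and read off the coefficient $\sum_i\abs{c_{i,\omega}}^2$; your explicit splitting by ending (resp.\ starting) type is exactly the bookkeeping the paper suppresses when it asserts that the coefficient of $w^*w$ in $\sum_j x_j^*x_j$ equals $\sum_{w_{j,k}=w}\abs{\alpha_{j,k}}^2$.

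For statement (3), however, you take a genuinely different route. The paper invokes non-commutative Gr\"obner basis theory: assuming $x\notin\Complex e$, it chooses a generating set of $\A$ containing $x$, takes a Gr\"obner basis $S$ of $\A$, observes that $S\cup S^*$ is a Gr\"obner basis of $\B$ and that $x^*x$ lies in the associated word basis, and compares coefficients to force $y=\alpha e+\beta x$ with $\alpha=0$, $\abs{\beta}=1$; the excluded case $x\in\Complex e$ then needs a separate trick (pass to the unitization $\A^1$, where $x$ is no longer a multiple of the unit, and apply the first case inside $\D(\A^1)$). You instead stay entirely inside the decomposition (\ref{Fock}): the degree argument gives $\deg y\le 1$, comparison of the two degree-two summands $\A_0\otimes\A_0^*$ and $\A_0^*\otimes\A_0$ kills the $\A_0^*$-component of $y$ (so $y\in\A$), and the Gram identity $\overline{c_i}\,c_j=\overline{d_i}\,d_j$ finishes by a rank-one argument. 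Your version buys elementarity and uniformity: no Gr\"obner machinery (the appendix is never needed for this proposition), and the scalar case $x\in\Complex e$ requires no special handling, since there the degree-two comparison simply yields $x_0=y_0=0$ and the unit components give $\abs{\alpha}=\abs{\beta}$. What the paper's route buys is that it makes $x$ itself a basis letter, so the top term of $y$ is identified with $x$ directly, and it reuses the Gr\"obner apparatus the paper develops anyway for Lemma~\ref{alg}; but as a proof of this proposition yours is shorter and self-contained. One cosmetic caveat: in (1) the type-splitting argument literally applies only when $N\ge 1$; the case $N=0$ is the trivial identity $\sum_i\abs{\alpha_i}^2e=0$, which you should note is handled separately.
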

\begin{proof}
To prove the first and the second statements take arbitrary  $x_1,
\ldots, x_n\in \B$. Let  $x_j= \sum_{k=1}^{n_j} \alpha_{j,k}
w_{j,k}$ where $w_{j,k}\in BW$ and $\alpha_{j,k}$ are complex
numbers. Denote $d= \max(\deg(x_1), \ldots,\deg(x_n))$ and
$M=\set{w_{j,k}| l(w_{j,k}) = d}$. Then for every $w\in M$ the
coefficient in front of  $w^*w$ in the decomposition of
$\sum_{j=1}^n x_j^* x_j$ into a linear combination of elements of
$BW$ is equal to $\sum_{w_{j,k}=w}\abs{\alpha_{j,k}}^2$ and hence is
not zero. In particular from this follows item 1. Moreover,
$\deg(\sum_{j=1}^n x_j^* x_j)=2d$. Since $\deg{e} = 0$ the equality
$\sum_{j=1}^n x_j^* x_j = e$ implies $d=0$. Thus $x_j\in\Complex e$.

Let us prove the last statement.
If $x=0$ then $y=0$ since  $\B$ is ordered.
 Suppose that  $x$ is not a scalar multiple of the unit $e$. We can
choose a generating set $X$ for $\A$ containing $x$ and well-ordered in such a way
 that for every $y\in X \setminus \set{x}$, $y\le x$ implies
$y=e$.   By the general theory of Gr\"obner bases there exists a
Gr\"obner basis $S\subseteq \F(X)$ for $A$. In particular, $x$ can
not be a greatest word of any $s\in S$. Then, clearly, $S\cup S^*$
is a Gr\"obner basis for $\B$. Let $BW$ denote the linear basis of
$\B$ consisting of words in $X$ (see appendix). Clearly,  the word
$x^*x$ can not appear as a term with a non-zero coefficient in
elements of $S\cup S^*$. Each of the words $x$ and $x^*$ can not be
the greatest word of any element $s\in S\cup S^*$.  Thus $x^*x\in
BW$. Decomposing $y$ by the basis we get $y= \alpha_1 w_1 + \ldots
+\alpha_n w_n$ for some $\alpha_j\in\Complex\setminus\set{0}$ and
distinct $w_j\in BW$. Hence $y^*y = \sum_{i, j}
\overline{\alpha_i}\alpha_j w_i^* w_j$. Assume that $y$ is not a
scalar multiple of $e$. Put $M=\set{j|\ l(w_j) = \deg(y)}$. For any
$i\in M$ the word $w_i^*w_i$ lies in $BW$ and has the coefficient
$\abs{\alpha_i}^2$ in the decomposition of $y^*y$ by the linear
basis $BW$. From this and from the equality $x^*x =y^*y$ follows
that $M$ is a singleton $M=\set{j}$ and $w_j = x$. In particular,
$\deg y =1$ and $y = \alpha e + \beta x$ ($\alpha, \beta
\in\Complex$). Thus we have
$$ x^*x = \abs{\alpha}^2 +\overline{\alpha}\beta x +
\overline{\beta}\alpha x^* +\abs{\beta}^2 x^*x.$$ Since $e, x, x^*,
x^*x$ lie in $BW$ we get $\alpha=0$, $\abs{\beta}=1$. Thus
$x$ and $y$ are proportional as claimed.

We are left with the case $x = \gamma e$ for some
$\gamma\in\Complex$. Let $\A^1$ denote the unitization of $\A$ which
is $\Complex\oplus\A$ as a vector space with a  multiplication given
by $(\lambda, a) (\mu,b) = (\lambda \mu, \lambda b+\mu a+ab)$. The
subalgebra $\A$ is an ideal in $\A^1$ of codimension one. Then the
fact that $x$ as an element of $\A^1$ is not a multiple of the unit
and equality $x^*x= y^*y$ is still true in $\D(\A^1)$. Hence by the
first part of the proof $x=\lambda y$ for some $y\in \Complex$.

\end{proof}

\begin{remark} A much stronger fact than statement 1
in the above theorem was established in~\cite{Popovych}. Namely, it
was proved that  $\D(\Amc)$ is $O\sp*$-representable i.e. $\D(\Amc)$
is isomorphic to a $*$-subalgebra in the $*$-algebra $L(\h)$ of all
linear operators acting on a pre-Hilbert space $\h$.
\end{remark}

\begin{corollary}
For any algebra $\A$ its $*$-double $\D(\A)$ contains no non-scalar
unitary or projection or partial isometry.
\end{corollary}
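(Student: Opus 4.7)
The proof is essentially a direct corollary of Proposition~\ref{square}, so I will sketch how each of the three cases reduces to one of its statements. The overall strategy is: unitary $\Rightarrow$ apply statement~2 directly; projection $\Rightarrow$ symmetrize with the complementary projection to get into the form of statement~2; partial isometry $\Rightarrow$ reduce to the projection case followed by the ordered/unitary case.

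The case of a unitary $u\in\D(\A)$ is immediate: the defining relation $u^*u=e$ is already of the form $\sum_{j=1}^n x_j^*x_j = e$ with $n=1$ and $x_1=u$, so statement~2 of Proposition~\ref{square} gives $u\in\Complex e$.

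For a projection $p=p^*=p^2$, the key observation is that $e-p$ is also a projection, and the identity
\[
p^*p + (e-p)^*(e-p) = p + (e-p) = e
\]
again puts us in the setting of statement~2, which yields $p\in\Complex e$; combined with $p^2=p$ this forces $p\in\{0,e\}$. Finally, if $v\in\D(\A)$ is a partial isometry, i.e. $vv^*v=v$, then $v^*v$ is a projection, so by the projection case just handled $v^*v\in\{0,e\}$. If $v^*v=0$ then statement~1 (ordered) gives $v=0$, while if $v^*v=e$ we are back in the unitary case and $v\in\Complex e$.

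There is no serious obstacle; the only place requiring a small idea is the projection case, where one must notice that adjoining the companion term $(e-p)^*(e-p)$ converts the projection relation into the algebraically-bounded equation $\sum x_j^*x_j=e$ to which statement~2 applies.
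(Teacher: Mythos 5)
Your proof is correct and takes essentially the same approach as the paper: both reduce all three cases to Proposition~\ref{square}, and your treatment of partial isometries (via $v^*v$ being a projection, orderedness when $v^*v=0$, and the isometry relation otherwise) is exactly the paper's. The only cosmetic difference is the projection step, where the paper passes to the unitary $2p-e$ while you use the identity $p^*p+(e-p)^*(e-p)=e$ together with statement~2; both are one-line reductions to the same proposition, and the paper itself remarks that the corollary follows from the fact that unitaries, projections and partial isometries are algebraically bounded, which is precisely the observation you exploit.
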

\begin{proof}
A proof can be derived from the above proposition and  the known
fact that partial isometries, projections and unitaries are
algebraically bounded elements. But we will give a short direct
proof.

Assume that $y\in \D(\A)$ is unitary then $y^*y=e^*e$. Hence by the
above proposition $y$ is scalar. If $y$ is projection then $z=2y-e$
is unitary and thus  scalar. If $y$ is partial isometry then $y^*y$
and $yy^*$ are projections which can be only 0 or e. If $y^*y=0$ or
$yy^*=0$ then $y=0$ since  $\D(\A)$ is ordered $*$-algebra.
Otherwise $y^*y=1$ and $yy^*=1$. Hence $y$ is unitary and
consequently scalar.
\end{proof}
\section{Faithful representations of $*$-algebras.}

In this section we present  conditions for a $*$-algebra to have a
faithful $*$-representation by bounded operators acting  on a
Hilbert space. A $*$-algebra with this property will be called {\it
$C\sp*$-representable}.

For a $*$-algebra $\A$ the {\it reducing ideal} denoted by
$\rad(\A)$ is
 the intersection of the kernels of all $*$-representations of
$\A$ on Hilbert spaces. There is a connection between the problem of
finding reducing ideals and finding closures of ideals in the
$C^*$-algebra of a free group. This connection was not noticed before and
  we will present it here. If a $*$-algebra $\A$ is generated by unitary elements then $\A$ is  a quotient of
the group $*$-algebra $\F_*= \Complex [\Fg(X)]$ by a $*$-ideal $\J$. Here
  $\Fg(X)$ denotes the free group with a generating set $X$. We will
identify $\A$ with $\F_*/ \J$. Let $C^*(\Fg)$ be the  group
$C\sp*$-algebra which is the universal enveloping $C\sp*$-algebra of
$\F_*$. In particular, $\F_*$ is a dense $*$-subalgebra in $C^*(\Fg)$. We will
consider $\F_*$ as a topological space with topology induced from
$C^*(\Fg)$ and  will denote by $\J^{cl}$ the closure of $\J$ in
$\F_*$.

\begin{proposition}
$\rad(\A) =  \J^{cl}/ \J$.
\end{proposition}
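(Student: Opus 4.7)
The plan is to exploit the universal property of the group $C^*$-algebra to identify the $*$-representations of $\A = \F_*/\J$ with those of the $C^*$-quotient $C^*(\Fg)/\overline{\J}$, where $\overline{\J}$ is the closure of $\J$ in $C^*(\Fg)$. Once this identification is in place, the statement $\rad(\A) = \J^{cl}/\J$ reduces to the elementary fact that every $C^*$-algebra has a separating family of $*$-representations on Hilbert space.

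The first step, and the main technical point, is to show that every $*$-representation $\pi\colon \A \to L(\h)$ automatically takes values in $\BH$ and lifts to a $C^*$-homomorphism out of $C^*(\Fg)$. Here the hypothesis that $\A$ is generated by unitaries is essential: the free group generators project to unitaries in $\A$, and their images under $\pi$ satisfy $\pi(u)^*\pi(u) = \pi(u)\pi(u)^* = 1$, hence are isometries and in particular bounded. So $\pi\circ q \colon \F_* \to \BH$, where $q\colon \F_*\to \A$ is the quotient, is a $*$-representation by bounded operators, and the universal property of $C^*(\Fg)$ extends it uniquely to a contractive $*$-homomorphism $\hat\pi\colon C^*(\Fg) \to \BH$.

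Next I would observe that continuity of $\hat\pi$ combined with $\hat\pi(\J) = 0$ forces $\hat\pi(\overline{\J}) = 0$, so $\hat\pi$ descends to a $*$-representation of $C^*(\Fg)/\overline{\J}$; conversely, any $*$-representation of $C^*(\Fg)/\overline{\J}$ pulls back to a bounded $*$-representation of $\F_*$ vanishing on $\J$, hence to a $*$-representation of $\A$. This yields a bijective correspondence between Hilbert-space $*$-representations of $\A$ and those of the $C^*$-algebra $C^*(\Fg)/\overline{\J}$.

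To conclude, one invokes the standard fact that the intersection of kernels of all Hilbert-space $*$-representations of a $C^*$-algebra is $\{0\}$ (via the universal Gelfand--Naimark representation). Pulling back through $C^*(\Fg) \to C^*(\Fg)/\overline{\J}$ gives $\bigcap_{\hat\pi} \ker \hat\pi = \overline{\J}$, and intersecting with the dense subalgebra $\F_*$ yields $\bigcap_\pi \ker(\pi\circ q) = \overline{\J}\cap \F_* = \J^{cl}$. Passing to the quotient by $\J$ gives $\rad(\A) = \J^{cl}/\J$. Everything after the initial boundedness/lifting step is a routine diagram chase combined with a continuity argument; the lifting step is the one place where the specific structure of $\A$ (generation by unitaries) genuinely enters and is the main obstacle to keep track of.
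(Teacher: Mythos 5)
Your proof is correct, and it differs from the paper's in organization rather than in ingredients. The paper routes the argument through the enveloping $C^*$-algebra $C^*(\F_*/\J)$ of $\A$ itself: it uses the universal property of that algebra to identify $\rad(\A)$ with the kernel of the canonical map $\A \to C^*(\F_*/\J)$, then constructs the surjection $\hat q\colon C^*(\F_*)\to C^*(\F_*/\J)$ extending the quotient map and proves $\ker \hat q = \overline{\J}$ by two inclusions (one from the fact that $\ker\hat q$ is closed and contains $\J$, the other by factoring a faithful representation of $C^*(\F_*)/\overline{\J}$ through $\hat q$). You never introduce $C^*(\F_*/\J)$; instead you establish a bijective correspondence between Hilbert-space $*$-representations of $\A$ and those of the quotient $C^*(\F_*)/\overline{\J}$, and then quote the Gelfand--Naimark separation property for that quotient. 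Both arguments rest on the same two facts --- the universal property of $C^*(\Fg)$ for lifting bounded $*$-representations of $\F_*$, and faithful representability of a $C^*$-quotient --- and in substance both establish the identification $C^*(\F_*/\J)\cong C^*(\F_*)/\overline{\J}$. What your version buys: it sidesteps the existence of the enveloping $C^*$-algebra of $\A$, which the paper invokes without comment and which itself depends on the unitary-generation hypothesis (one needs $\sup_\pi\norm{\pi(x)}<\infty$ for each $x$); your explicit boundedness-of-lifts step makes the role of that hypothesis transparent. What the paper's version buys: the isomorphism $C^*(\F_*/\J)\cong C^*(\F_*)/\overline{\J}$ is exhibited as an explicit by-product, a slightly stronger structural statement than the equality of reducing ideals alone.
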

\begin{proof}
Let $q\colon \F_* \to \F_*/ \J$ be the canonical epimorphism. Let
$\pi\colon \F_*/ \J \to C^*(\F_*/ \J)$  denote  the canonical
$*$-homomorphism into enveloping $C\sp*$-algebra $C^*(\F_*/ \J)$. We
 will consider $\pi$ as a $*$-representation $\pi\colon \F_*/ \J
\to \BH$ on some Hilberst space $\h$. From the universal property of
the enveloping $C^*$-algebra follows that $\rad(\A) = \ker\pi$.
Clearly, $\pi\circ q$ is a $*$-representation of $\F_*$. Thus by the
universal property of the enveloping $C\sp*$-algebra $\pi\circ q$
can be extended to the $*$-representation of $C^*(\F_*)$. Hence
there is an extension of $q$ to a surjective $*$-homomorphism
$\hat{q}\colon C^*(\F_*) \to C^*(\F_*/ \J)$. Thus $\J\subseteq \ker
\hat{q}$. Since $\ker \hat{q}$ is closed the closure
$\overline{\J}$ of $\J$ in $C^*(\F_*)$ is also contained in $\ker
\hat{q}$.

To show the converse inclusion note that the quotient $C^*(\F_*)/
\overline{\J}$ is a $C\sp*$-algebra. It can be regarded as a
$C\sp*$-subalgebra in $\BOP(\widetilde{\h})$ for a Hilbert space
$\widetilde{\h}$. Hence the quotient map $C^*(\F_*)\to C^*(\F_*)/
\overline{\J}$ can be viewed as a $*$-representation $\tau\colon
C^*(\F_*)\to \BOP(\widetilde{\h})$. Restriction of $\tau$ to $\F_*$
 maps  $\J$ to zero and thus can be regarded as a $*$-representation
$\widetilde{\tau}$ of the quotient $\F_*/ \J$. We will denote by the
same symbol its unique extension to the $C^*$-algebra  $C^*(\F_*/
\J)$. A moment reflection reveals that the following diagram is
commutative
\begin{equation*}\label{diag}
\xymatrix{ C^*(\F_*) \ar[r]^{\hat{q} } \ar[dr]_{\tau} & C^*(\F_*/
\J) \ar[d]^{\widetilde{\tau}} &  \\  & \BOP(\widetilde{\h}) }
\end{equation*}
Thus $\ker \hat{q} \subseteq \ker \tau = \overline{\J}$ which gives
 $\rad (\F_*/ \J) = (\overline{\J}\cap \F_*)/\J$.
\end{proof}
\begin{remark}The similar description can be obtained for any algebraically
bounded $*$-algebra (for the definition see for
example~\cite{Popovych}) with a $C\sp*$-algebra generated by a free
family of contractions in place of $C^*(\F_*)$.
\end{remark}

Let us recall a criterium for a $*$-algebra to have a trivial
reducing ideal in terms of cones of self-adjoint elements.

Firstly we give necessary definitions and fix notations. Let
$\mathcal{A}_{sa}$ denote the set of self-adjoint elements in
$\mathcal{A}$. A subset $C\subset \mathcal{A}_{sa}$ containing the
unit $e$ of $\mathcal{A}$ is an \textit{algebraically admissible} cone
(see \cite{powers}) provided that
\begin{enumerate}[(i)]
\item   $C$ is a cone  in $\mathcal{A}_{sa}$, i.e. $\lambda
x+\beta y\in C$ for all $x$, $y\in C$ and $\lambda\geq 0$,
$\beta\geq 0$, $\lambda,\beta\in\mathbb{R}$; \item $C \cap (-C)
=\{0\}$; \item $x C x^* \subseteq C$ for every $x\in \mathcal{A}$.
\end{enumerate}

We call $e\in \mathcal{A}_{sa}$ an \textit{order unit} if for every
$x\in \mathcal{A}_{sa}$ there exists $r>0$ such that $re+x\in C$. An
order unit $e$ is \textit{Archimedean} if $re+x\in C$ for all $r>0$
implies that $x\in C$. The following theorem was proved
in~\cite{JushPop}.
\begin{theorem}\label{onecone}
Let $\mathcal{A}$ be a  $*$-algebra with the unit $e$ and
$C\subseteq \mathcal{A}_{sa}$ be a cone containing $e$. If $x C x^*
\subseteq C$ for every $x\in \mathcal{A}$ and $e$ is an Archimedean
order unit then there is a  unital $*$-representation $\pi:
\mathcal{A} \to \BH$ such that $\pi(C) = \pi(\mathcal{A}_{sa})\cap
\BH^+$ where $\BH^+$ is the set of positive operators. Moreover
\begin{enumerate} \item\label{onecone1} $\norm{\pi(x)} = \inf \{r>0 : r^2 \pm x^*x\in C\}$.
\item\label{onecone2} $\ker \pi =\{ x : x^*x \in C \cap (-C)\}$.
\item\label{onecone3} If $C \cap (-C) = \{0\}$ then $\ker \pi =\{
0\}$, $\norm{\pi(a)} = \inf \{r>0 : r \pm a\in C\}$ for all
$a=a^*\in \mathcal{A}$ and $\pi(C) = \pi(\mathcal{A})\cap \BH^+$.
\end{enumerate}
\end{theorem}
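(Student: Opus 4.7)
The plan is to mimic the classical Gelfand--Naimark construction: extract a $C^*$-seminorm from the cone $C$, pass to the Hausdorff completion, and embed the resulting $C^*$-algebra into $\BH$. Define
\[
p(x) := \inf\bigl\{r > 0 : r^2 e - x^* x \in C\bigr\} \qquad (x \in \mathcal{A}).
\]
Applying the conjugation hypothesis $xCx^* \subseteq C$ with $x$ replaced by $x^*$ to $e \in C$ yields $x^*x = x^*ex \in C$ for every $x$, so the defining set is a non-empty subset of $(0,\infty)$; the Archimedean order unit condition then makes $p(x)$ finite.

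Next I would verify that $p$ is a submultiplicative $*$-seminorm satisfying $p(x^*x) = p(x)^2$. Submultiplicativity is direct: if $r^2 e - x^*x \in C$ and $s^2 e - y^*y \in C$, then conjugating the first by $y$ gives $r^2 y^*y - (xy)^*(xy) \in C$, and adding $r^2(s^2 e - y^*y) \in C$ produces $r^2 s^2 e - (xy)^*(xy) \in C$, so $p(xy) \le p(x) p(y)$. Subadditivity is obtained by choosing $\lambda = t/s$ in the inclusion $(\lambda x - y)^*(\lambda x - y) \in C$ and expressing $(s+t)^2 e - (x+y)^*(x+y)$ as a positive combination of elements of $C$. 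The $C^*$-identity $p(x^*x) = p(x)^2$ and the equality $p(x^*) = p(x)$ are the subtle points, each depending on the Archimedean closure of $C$.

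Put $N := p^{-1}(0)$, a closed $*$-ideal; the quotient $\mathcal{A}/N$ is a pre-$C^*$-algebra whose completion is a $C^*$-algebra. By Gelfand--Naimark it embeds isometrically into $\BH$ for a suitable Hilbert space $\mathcal{H}$; let $\pi : \mathcal{A} \to \BH$ be the composition. Item (1), $\norm{\pi(x)} = p(x)$, is built in. For item (2), $p(x) = 0$ iff $r^2 e - x^*x \in C$ for all $r > 0$ iff (Archimedean) $-x^*x \in C$, and combining with $x^*x \in C$ gives $x^*x \in C \cap (-C)$. When $C \cap (-C) = \{0\}$ the kernel is trivial. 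The self-adjoint norm formula $p(a) = \inf\{r : re \pm a \in C\}$ follows from $p(a)^2 = p(a^2)$, the factorization $r^2 e - a^2 = (re-a)(re+a)$, and Archimedean closure. Finally, for $\pi(C) = \pi(\mathcal{A}_{sa}) \cap \BH^+$: given $c \in C$ and $r \ge p(c)$, the formula yields $re - c \in C$, hence $\norm{re - \pi(c)} \le r$ and $\pi(c) \ge 0$; conversely, if $\pi(a) \ge 0$ then $\norm{re - \pi(a)} \le r$ for $r \ge \norm{\pi(a)}$, so $p(a - re) \le r$, and the formula gives $te + a \in C$ for every $t > 0$, whence Archimedean closure yields $a \in C$.

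The main obstacle is transporting information between the quadratic cone condition $r^2 e - x^*x \in C$ and the linear conditions $re \pm a \in C$ without access to square roots. This shows up both in the $C^*$-identity $p(x^*x) = p(x)^2$ of the second step and in the self-adjoint norm formula underlying the identification $\pi(C) = \pi(\mathcal{A}_{sa}) \cap \BH^+$ of the third. The crucial lever throughout is the Archimedean property of the order unit, which converts families of approximate cone inclusions into exact ones and is indispensable at every technical step.
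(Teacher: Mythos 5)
You should first note that the paper itself contains no proof of Theorem~\ref{onecone}: it is quoted from~\cite{JushPop}, where the argument runs through positive functionals and GNS representations (an order-unit seminorm on $\mathcal{A}_{sa}$, separation by states, boundedness of each GNS representation via $xCx^*\subseteq C$, then a direct sum over all states). Your route is genuinely different: extract a $C^*$-seminorm $p(x)=\inf\{r>0:\ r^2e-x^*x\in C\}$ directly from the cone, quotient by its kernel, complete, and invoke Gelfand--Naimark. This strategy is in principle viable and arguably more elementary, and your treatments of finiteness, submultiplicativity, and subadditivity of $p$ are correct.

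However, as written there is a genuine gap located exactly where the content of the theorem sits: the identities $p(x^*)=p(x)$ and $p(x^*x)=p(x)^2$ are announced as ``the subtle points'' and never proved. Without them $\mathcal{A}/N$ is not a pre-$C^*$-algebra, there is no $C^*$-completion, and Gelfand--Naimark cannot be applied, so the construction collapses. Worse, the one technique you indicate for bridging quadratic and linear cone conditions --- ``the factorization $r^2e-a^2=(re-a)(re+a)$'' --- proves nothing under the given axioms, because $C$ is not assumed (and cannot be shown a priori) to be closed under products of commuting elements; so $re\pm a\in C$ does not yield $r^2e-a^2\in C$ by factoring. What actually closes these gaps: in one direction the identity $2r(re\mp a)=(r^2e-a^2)+(re\mp a)^2$ shows that $r^2e-a^2\in C$ implies $re\pm a\in C$ (no Archimedean property needed); in the other direction one must conjugate, e.g.\ $(re+a)(re-a)(re+a)+(re-a)(re+a)(re-a)=2r(r^2e-a^2)$, whose left-hand side lies in $C$ because $uCu\subseteq C$ for self-adjoint $u$. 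The same two devices give the $C^*$-identity: with $y=x^*x$ and $s^2e-y\in C$, conjugating $y$ by $s^2e-y$ and $s^2e-y$ by $y$ and summing gives $s^2y-y^2\in C$, hence $s^4e-y^2=s^2(s^2e-y)+(s^2y-y^2)\in C$, so $p(x^*x)\le p(x)^2$; the reverse inequality follows from the first identity applied to $a=y$. The equality $p(x^*)=p(x)$ is harder still: from $x(r^2e-x^*x)x^*=r^2xx^*-(xx^*)^2\in C$ and an order-unit bound $t_0e-xx^*\in C$ one must bootstrap the bound $t_{n+1}=r\sqrt{t_n}\downarrow r^2$ and only at the end invoke the Archimedean property. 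None of this is routine, and none of it appears in your proposal; supplying these arguments is precisely what a complete proof along your lines requires.
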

In particular a unital $*$-algebra $\mathcal{A}$ which possess  an
algebraically admissible cone such that the algebra unit is an
Archimedean order unit has a faithful $*$-representation by bounded
operators on a Hilbert space.

In the next section we will use in essential way the free product
construction.  Let $\mathcal{A}_1$ and $\mathcal{A}_2$ be two unital
$*$-algebras and $\phi_1$, $\phi_2$ be linear unital functionals on
$\mathcal{A}_1$  and  $\mathcal{A}_2$ respectively. Let
$\overset{\circ}{\mathcal{A}_j} = \ker
 \phi_j$ ($j=1,2$). The algebraic free product $\mathcal{A}_1 \star
 \mathcal{A}_2$ as a linear space is a quotient of the space
 $$ \mathbb{C} \oplus {\mathcal{A}_1} \oplus{\mathcal{A}_2}
 \oplus ({\mathcal{A}_1} \otimes {\mathcal{A}_2})
 \oplus ({\mathcal{A}_2}\otimes {\mathcal{A}_1})
 \oplus ({\mathcal{A}_1}\otimes{\mathcal{A}_2}
 \otimes {\mathcal{A}_1})\oplus\ldots $$
 by a subspace in order to identify the units in $\mathbb{C}$,
 $\mathcal{A}_1$ and $\mathcal{A}_2$. As a vector spaces
 $\mathcal{A}_j = \mathbb{C}\oplus \overset{\circ}{\mathcal{A}_j}$
 ($j=1,2$) and thus as vector space
$$\mathcal{A}_1 \star
 \mathcal{A}_2 =  \mathbb{C} \oplus \overset{\circ}{\mathcal{A}_1} \oplus \overset{\circ}{\mathcal{A}_2}
 \oplus (\overset{\circ}{\mathcal{A}_1} \otimes \overset{\circ}{\mathcal{A}_2})
 \oplus (\overset{\circ}{\mathcal{A}_2}\otimes \overset{\circ}{\mathcal{A}_1})
 \oplus (\overset{\circ}{\mathcal{A}_1}\otimes\overset{\circ}{\mathcal{A}_2}
 \otimes \overset{\circ}{\mathcal{A}_1})\oplus\ldots $$
 The projection onto $\mathbb{C}$ associated with the above direct
 sum is a free product $\phi_1\star \phi_2$.
It is known that the free product of two faithful states is a
faithful one (see \cite{Avitzour}) which implies that the free
product of two $C^*$-algebras is $C^*$-representable. Another proof
can be deduced from Theorem~\ref{onecone} by defining an
algebraically admissible cone as $C(\mathcal{A}_1,\mathcal{A}_2)=
\{a\in (\mathcal{A}_1 \star
 \mathcal{A}_2)^+ | \text{ for all } \phi_1\in S(\A_1),
 \phi_2\in S(\mathcal{A}_2)$,  and every $x\in \mathcal{A}_1 \star
 \mathcal{A}_2,  (\phi_1\star \phi_2)(x a x^*)\ge 0\}.$ Where
 $\mathcal{A}_1$ and $\mathcal{A}_2$ are $C^*$-algebras,
 $S(\A_j)$ is the set of states on $\A_j$ and
 $(\mathcal{A}_1 \star
 \mathcal{A}_2)^+$ is the set of finite sums of elements of the form $x^*x$ with
$x\in \mathcal{A}_1 \star
 \mathcal{A}_2$.

\section{Faithful Representations of $*$-Doubles.}\label{faithdb}

In this section we will prove that each operator algebra is
completely boundedly isomorphic to an operator algebra which
generates its $*$-double. In particular, this enables one to recast
existing criteria of $C\sp*$-representability of $*$-algebras (for
example those presented in the previous section) as  criteria of
representability of associative algebras in $\BH$.

We are going to prove that the $*$-double of every operator algebra
is representable in $\BH$ but first we need the following.

\begin{lemma}\label{alg}
Let $\A$ be a $*$-algebra. Then  $\D(\A)$ is $*$-isomorphic to a
$*$-subalgebra in the free product $\A\star \D(\Complex
[\mathbb{Z}_2])\star \D(\Complex [\mathbb{Z}_2])$
\end{lemma}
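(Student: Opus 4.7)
The plan is to construct the embedding via the universal property of $\D(\A)$ (diagram \eqref{diag}). Let $s_1, s_1^*$ denote the canonical generators of the first copy of $\D(\Complex[\mathbb{Z}_2])$ inside $\B := \A \star \D(\Complex[\mathbb{Z}_2]) \star \D(\Complex[\mathbb{Z}_2])$ and let $s_2, s_2^*$ denote those of the second copy; each satisfies $s_i^2 = (s_i^*)^2 = e$, and the $\B$-involution swaps $s_i$ with $s_i^*$. First I would define an algebra homomorphism
\[
\pi\colon \A \to \B,\qquad \pi(a) = s_1 s_2 \, a \, s_2 s_1,
\]
which is nothing but the inclusion $\A \hookrightarrow \B$ composed with conjugation in $\B$ by the invertible element $s_1 s_2$ (whose inverse is $s_2 s_1$, using $s_1^2 = s_2^2 = e$). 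By the universal property of the $*$-double, $\pi$ extends uniquely to a $*$-homomorphism $\hat\pi\colon \D(\A) \to \B$ with $\hat\pi(\phi(a)) = \pi(a)^* = s_1^* s_2^* \, a^* \, s_2^* s_1^*$, where $a^*$ is the image of $a$ under the original involution of $\A$.

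The core task is to show $\hat\pi$ is injective. By Proposition~\ref{prop}(3), $\D(\A)$ decomposes as a direct sum indexed by length, each summand spanned by alternating tensor words $c_1 c_2 \cdots c_n$ with $c_k \in \A_0 \cup \A_0^*$ strictly alternating between $\A_0$ and $\A_0^*$. For a typical alternating word $w = a_1 \phi(b_1) a_2 \phi(b_2) \cdots$ with $a_i, b_i$ nonscalar in $\A$, the image
\[
\hat\pi(w) = (s_1 s_2 a_1 s_2 s_1)(s_1^* s_2^* b_1^* s_2^* s_1^*)(s_1 s_2 a_2 s_2 s_1) \cdots
\]
collapses, after grouping consecutive factors from the same free-product component, into a word whose successive letters alternate among the three components of the triple free product, with no two adjacent letters coming from the same component. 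The interior first-$\D(\Complex[\mathbb{Z}_2])$ letters are $s_1 s_1^*$ or $s_1^* s_1$, the second-$\D(\Complex[\mathbb{Z}_2])$ letters are single generators $s_2$ or $s_2^*$, and the $\A$-letters are the $a_i$ or $b_i^*$. Every one of these is nonscalar: the $\A$-letters by assumption, and the $\D(\Complex[\mathbb{Z}_2])$-letters because $s_i$, $s_i^*$, and $s_i s_i^*$ all correspond to non-identity elements of the infinite dihedral group $\mathbb{Z}_2 * \mathbb{Z}_2$. Hence $\hat\pi(w)$ is a nonzero reduced word in $\B$.

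To finish I would invoke the standard structure theorem for the free product of unital $*$-algebras: reduced words drawn from bases of the nonscalar parts of each factor form a basis of the free product. Since distinct alternating basis words of $\D(\A)$ produce distinct reduced words of $\B$ under $\hat\pi$, linear independence is preserved and any nonzero element of $\D(\A)$ maps to a nonzero element of $\B$. The principal obstacle is bookkeeping rather than conceptual: one must carefully track the alternation pattern and verify that every collapsed letter is nonscalar so that the resulting word in $\B$ is genuinely reduced. The role of conjugating by $s_1 s_2$ (drawing one involution from each copy) is precisely to make this alternation pattern symmetric and automatic, so that the three-component reduced-word structure falls out without case analysis.
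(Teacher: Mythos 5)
Your proposal is correct in substance, and the embedding you use is the same one the paper uses: conjugation of $\A$ by $s_1s_2$ and of $\A^*$ by its adjoint (the paper writes this as $a\mapsto g^{-1}ag$, $\phi(a)\mapsto g^{*}a^{*}g^{*-1}$ with $g$ the invertible generator of $\Complex[\mathbb{Z}]$, and only at the last step identifies $g$ with $s_1s_2$; your version differs only by swapping $g$ and $g^{-1}$). Where you genuinely diverge is the injectivity argument, which is the technical heart of the lemma. The paper proves injectivity with noncommutative Gr\"obner bases: it builds a Gr\"obner basis of $\A\star\D(\Complex[\mathbb{Z}])$ from one for $\A$, checks that the words $g^{-1}u_1gg^{*}w_1g^{*-1}\cdots g^{-1}u_ngg^{*}w_ng^{*-1}$ are canonical basis words, reads off a presentation of the image, and exhibits an explicit inverse homomorphism onto $\D(\A)$. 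You instead define the map by the universal property of the $*$-double and verify injectivity through the reduced-word (Fock-type) decomposition of the free product, the same decomposition the paper records in Proposition~\ref{prop}(3) and Section 3. Your route is more elementary (no Gr\"obner machinery, no appendix) and skips the intermediate algebra $\A\star\D(\Complex[\mathbb{Z}])$ altogether; the paper's route buys an explicit presentation of the image together with the inverse map, and the intermediate statement $\D(\A)\hookrightarrow\A\star\D(\Complex[\mathbb{Z}])$, which the paper reuses later in the remark relating this construction to Kadison's similarity problem.

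One point in your argument needs tightening. For the reduced-word basis theorem the letters must lie in chosen vector-space complements of $\Complex e$ in each factor; being nonscalar is not sufficient. Your $\A$-letters of the second kind are $b^{*}$ with $b\in\A_0$, and if $\A_0$ is not $*$-invariant then $b^{*}$ may have a nonzero scalar component, in which case your displayed word is not genuinely reduced: the scalar part collapses $s_2^{*}\,e\,s_2^{*}$ to $e$ and produces shorter correction terms. The fix is one line: since $\A$ is a $*$-algebra, choose $\A_0=\ker f$ for a unital linear functional $f$ with $f(x^{*})=\overline{f(x)}$, so that $\A_0$ is $*$-invariant and $\set{e_\lambda^{*}}$ is again a basis of $\A_0$; alternatively, a triangularity argument ordered by word length absorbs the correction terms. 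The dihedral letters cause no such problem, since $s_1$, $s_1^{*}$, $s_1s_1^{*}$, $s_1^{*}s_1$ are non-identity group elements of $\mathbb{Z}_2 * \mathbb{Z}_2$ and hence lie in the natural complement (the span of non-identity group elements) in $\D(\Complex[\mathbb{Z}_2])$.
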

\begin{proof}
  Fix some presentation by
generators and relations $$\mathcal{A} = \mathbb{C}\langle x, x^* |
p_\beta (x, x^*) = 0, \beta\in \Delta \rangle.$$ Here $x
=\set{x_\alpha}_{\alpha\in \Lambda}$, $x^* =
\set{x^*_\alpha}_{\alpha\in \Lambda}$.

Then, clearly,  $\mathcal{A}^* =   \mathbb{C}\langle y, y^*| p_\beta
(y, y^*) = 0, \beta\in \Delta \rangle$ ($\simeq \A$) with
anti-isomorphism between $\mathcal{A}$ and $\mathcal{A}^*$ given by
$x_\alpha \mapsto y_\alpha^*$. Hence
$$\D(\mathcal{A}) = \mathbb{C}\langle x, x^*,
y, y^* | p_\beta (x, x^*) = 0, p_\beta (y, y^*) = 0, \beta\in \Delta
\rangle. $$ The involution on $\D(\mathcal{A})$ is given by
$x_\alpha^\sharp = y_\alpha^*$, $(x^*_\alpha)^\sharp = y_\alpha$.

Let $\mathcal{G}= \Complex [\mathbb{Z}]$ be the group algebra of the
group $\mathbb{Z}$ and $g$ its invertible generator.
 Let us define a
map $\gamma$ on the set of generators of $\D(\mathcal{A})$ with
values in the free product of $*$-algebras $\mathcal{A} \star
\D(\mathcal{G})$  by the rule
$$\gamma(x_\alpha) = g^{-1} x_\alpha g, \gamma(x_\alpha^*) = g^{-1}
x_\alpha^* g, \gamma(y_\alpha^*) = g^* x_\alpha^* g^{*-1},
\gamma(y_\alpha) = g^* x_\alpha g^{*-1}.$$

Then \begin{eqnarray*} p_\beta(\gamma(x), \gamma(x^*)) &=& g^{-1}
p_\beta (x,x^*) g,\\  p_\beta(\gamma(y), \gamma(y^*)) &=& g^*
p_\beta (x,x^*)g^{*-1}. \end{eqnarray*} Thus $\gamma$ can be
extended to a $*$-homomorphism between $\D(\mathcal{A})$
 and $\mathcal{B} = \mathcal{A} \star \D(\mathcal{G})$. Clearly, the image of
 $\gamma$ is a $*$-subalgebra  of $\mathcal{B}$ generated by $g^{-1} a
 g$ and $g^* a g^{*-1}$ where $a\in \mathcal{A}$. Denote this image by
 $\mathcal{D}$. We will  show   that $\gamma$ is injective.

\ Let $S\subset \F(X)$ (where $X= \set{x_\alpha}_\alpha \cup
\set{x^*_\alpha}_\alpha$) be a Gr\"obner basis for
 $\mathcal{A}$ and $W_{\mathcal{A}}$ be the set $BW(S)$ of basis words
 corresponding to $S$ (see appendix). Then the set of basis words $W_{\mathcal{B}}$ corresponding to
 $\mathcal{B}$ will be exactly the set of words in generators
 $x_\alpha$, $x_\alpha^*$, $g$, $g^{-1}$, $g^*$ and $g^{*-1}$
 which  contain no words $\hat{s}$ ($s\in S$) and no words $g g^{-1}$, $g^{-1} g$, $g^* g^{*-1}$,
 $g^{*-1} g^*$ as sub-words. Hence the word
  \begin{gather}
g^{-1} u_1 g g^* w_1 g^{*-1} \ldots g^{-1} u_n g g^* w_n g^{*-1}
\tag{*}
 \end{gather}
 is in $W_{\mathcal{B}}$ whenever $u_j$, $w_j\in W_{\mathcal{A}}$ ($1\le j\le
 n$), $u_k\not= e$ for $k\ge 2$ and $w_s \not= e$ for $s\le n-1$.

 Using the relations $p_\beta(g^{-1}x g, g^{-1}x^* g) =0$ and $p_\beta(g^{*} x g^{*-1}, g^{*} x^* g^{*-1})
 =0$  each polynomial in generators $g^{-1}x g$, $g^{-1}x^* g$,
 $g^{*} x g^{*-1}$, $g^{*} x^* g^{*-1}$ can be reduced to a linear
 combination of the words of the form $(*)$ which is a canonical
 form in $\mathcal{B}$. Indeed, to see this it is enough
to consider words in generators $g^{-1}x g$, $g^{-1}x^* g$,
 $g^{*} x g^{*-1}$, $g^{*} x^* g^{*-1}$ and in this case the claim is obvious.  Hence
 $\mathcal{D} \simeq \mathbb{C} \langle \set{a_\alpha, b_\alpha, a_\alpha^*, b_\alpha^*}_\alpha | p_\beta(a, b) =0,
 p_\beta(b^*, a^*) =0, \beta\in \Delta \rangle$ via isomorphism $a_\alpha \mapsto g^{-1}x_\alpha g$,
 $b_\alpha \mapsto g^{-1}x_\alpha^* g$, $a_\alpha^* \mapsto g^{*} x_\alpha^* g^{*-1}$,
 $b_\alpha^* \mapsto g^{*} x_\alpha g^{*-1}$.  This proves that map $\phi$ given on generators as
\begin{eqnarray*}\phi(g^{-1}x_\alpha g) &=&
 x_\alpha,\
\phi(g^{-1}x_\alpha^* g)=
 x_\alpha^*,\\  \phi(g^* x_\alpha g^{*-1})& = &
 y_\alpha,\  \phi(g^* x_\alpha^* g^{*-1})=
 y_\alpha^*
\end{eqnarray*} can be extended to a homomorphism between $\mathcal{D}$
 and $\D(\mathcal{A})$. Clearly, $\phi$ is inverse to $\gamma$.
 Thus $\D(\mathcal{A})$ can be regarded as a $*$-subalgebra in the
 free product of $*$-algebras $\mathcal{A} \star \D(\mathcal{G})$.

The  $*$-algebra $ \D(\mathcal{G})$ is isomorphic to a
$*$-subalgebra generated by $s_1 s_2$ in the free product
 $\D(\mathbb{C}\langle s_1| s_1^2 =1 \rangle) \star \D(\mathbb{C}\langle s_1| s_1^2 =1
 \rangle)$. Clearly,
$\mathbb{C}\langle s_1| s_1^2 =1 \rangle$ is isomorphic to
$\Complex[\mathbb{Z}_2]$ and thus $\mathcal{A} \star
\D(\mathcal{G})$ is $*$-isomorphic to a $*$-subalgebra in $\A\star
\Complex[\mathbb{Z}_2] \star \Complex[\mathbb{Z}_2]$.
\end{proof}

The main result of this section is the following.
\begin{theorem}\label{main}
Let $\A$ be an associative algebra. Then there is a pre-Hilbert
space $\h$ and an injective $*$-homomorphism $\phi$ from $\D(\A)$
into the $*$-algebra $\LH$ of the linear operators on $\h$.

If $\A$ is an operator algebra then $\h$ can be chosen to be a
Hilbert space. More precisely,  there exists an operator algebra
$\A_1\subseteq \BH$,  a completely isometric isomorphism $\phi\colon
\A\to \A_1$ and an invertible $S\in\BH$ such that $\D(\A)$ is
$*$-isomorphic to the $*$-subalgebra generated by $S^{-1} \A_1 S$.
Moreover, for every $\eps>0$ operator $S$ can be chosen s.t.
$\norm{S^{-1}} \norm{S} = 1+\eps$.
\end{theorem}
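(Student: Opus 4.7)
The first assertion—that $\D(\A)$ always $*$-embeds into $L(\h)$ for some pre-Hilbert $\h$—is the $O^*$-representability result of~\cite{Popovych} recalled in the Remark after Proposition~\ref{square}, so I would simply invoke it.

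For the operator algebra case, fix $\eps>0$ and, by the BRS theorem, a completely isometric unital representation $\phi_0\colon\A\to\BOP(\h_0)$. The goal is to exhibit a Hilbert space $\h$, a completely isometric embedding $\phi\colon\A\to\A_1\subseteq\BH$, and an invertible $S\in\BH$ with $\|S\|\,\|S^{-1}\|=1+\eps$ such that the algebra homomorphism $a\mapsto S^{-1}\phi(a)S$ extends, via the universal property of the $*$-double, to an \emph{injective} $*$-homomorphism $\hat\theta\colon\D(\A)\to\BH$; the image of $\hat\theta$ is then exactly the $*$-algebra generated by $S^{-1}\A_1 S$, which is therefore $*$-isomorphic to $\D(\A)$, and the cb-norm of $a\mapsto S^{-1}\phi(a)S$ is bounded by $\|S\|\,\|S^{-1}\|=1+\eps$.

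The key step is to apply Lemma~\ref{alg} with its ``$\A$'' played by the $*$-algebra $\D(\A)$; together with Proposition~\ref{prop}(5), this yields a $*$-embedding
\[
\D(\A)\star\D(\A^*)\;=\;\D(\D(\A))\;\hookrightarrow\;\D(\A)\star\D(\Complex[\mathbb{Z}])
\]
in which the ``outer'' $\D(\A)$ is realised as $g^{-1}(\cdot)g$ and the anti-isomorphic copy as $g^*(\cdot)g^{*-1}$, where $g$ is the canonical invertible element of $\D(\Complex[\mathbb{Z}])$. Restricting to the first factor produces an algebra map $\D(\A)\hookrightarrow\D(\A)\star\D(\Complex[\mathbb{Z}])$, $b\mapsto g^{-1}bg$, which becomes $*$-homomorphic under the identification $b^\sharp\leftrightarrow g^*bg^{*-1}$. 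The problem thus reduces to constructing a faithful Hilbert-space $*$-representation of the free product $\D(\A)\star\D(\Complex[\mathbb{Z}])$ in which $\A$ acts completely isometrically and $g$ is sent to an invertible $S$ with $\|S\|\,\|S^{-1}\|=1+\eps$.

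I would build such a representation as a reduced free product of two factor representations: (i)~a faithful Hilbert-space $*$-representation of $\D(\A)$ whose restriction to $\A$ is completely isometric—constructed on a Fock-type space indexed by the basis words~(\ref{basis}) of Proposition~\ref{prop}(3), with $\A$ acting on the ``vacuum'' via $\phi_0$ and freely extended to higher-degree words using the ordered $*$-algebra property of Proposition~\ref{square}(1) to supply the positivity needed for the GNS-type completion; and (ii)~any faithful $*$-representation of $\D(\Complex[\mathbb{Z}])$ sending $g$ to an invertible operator of prescribed similarity constant $1+\eps$, which is elementary since $\D(\Complex[\mathbb{Z}])$ imposes no relations on $g$ beyond invertibility (one may take, for instance, a $2\times 2$ invertible matrix of the desired condition number). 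The main obstacle is step (i): genuinely upgrading the pre-Hilbert $*$-representation of $\D(\A)$ from the first part of the theorem to a Hilbert-space representation that is completely isometric on $\A$. This is the crux where the operator algebra structure of $\A$—and not merely its bare algebra structure—enters essentially, supplying the norm control required for the Fock construction to yield bounded operators whose matrix norms reproduce those inherited from $\phi_0$.
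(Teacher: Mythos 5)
Your first paragraph (the pre-Hilbert space case via \cite{Popovych}) matches the paper. The operator-algebra case, however, contains a genuine gap, and it sits exactly where you place ``the crux.'' You apply Lemma~\ref{alg} with its $*$-algebra taken to be $\D(\A)$ itself, which reduces the problem to finding a faithful Hilbert-space $*$-representation of $\D(\A)\star\D(\Complex[\mathbb{Z}])$ --- but the first free factor is again $\D(\A)$, so the reduction is circular: your step (i) asks for precisely the object the theorem is supposed to produce. Worse, it asks for \emph{more} than the theorem gives: a faithful $*$-representation of $\D(\A)$ whose restriction to $\A$ is completely isometric. The theorem only achieves a restriction that is completely bounded with cb norm $1+\eps$ (that is exactly what the similarity $S$ is for), and the introduction points out that the existence of faithful $*$-representations of $\A*\A^*$ built from completely contractive (in particular, completely isometric) $\theta$ is the open problem of triviality of the ideal $\J$ in the construction of $C_{max}\sp*(\A)$. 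So step (i) is not a technical lemma awaiting a Fock-space argument; it is an open question strictly stronger than Theorem~\ref{main}. Moreover, the tool you propose for it --- the ordered-$*$-algebra property of Proposition~\ref{square}(1) --- only supplies positivity of an inner product; this is how $O\sp*$-representability on a \emph{pre}-Hilbert space is obtained in \cite{Popovych}, and it gives no norm control whatsoever, so the left-multiplication operators on your Fock-type space have no reason to be bounded, let alone to reproduce the matrix norms of $\phi_0$. The $\eps$ and the similarity $S$ in the statement are not cosmetic: they are the price paid to obtain boundedness at all.

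The paper breaks the circularity by a different choice of the $*$-algebra fed into Lemma~\ref{alg}: take a $C\sp*$-algebra $\B$ containing $\A$ completely isometrically, note $\D(\A)\subseteq\D(\B)$ by Proposition~\ref{prop}(2), and apply Lemma~\ref{alg} to $\B$, obtaining a $*$-embedding of $\D(\A)$ into $\B\star\D(\Complex[\mathbb{Z}_2])\star\D(\Complex[\mathbb{Z}_2])$ under which $\A$ lands on $(s_1s_2)^{-1}\A(s_1s_2)$. Now every free factor is $C\sp*$-representable --- $\B$ trivially, and $\D(\Complex[\mathbb{Z}_2])$ via an explicit direct sum of two-dimensional representations $s\mapsto\left(\begin{smallmatrix}1&\lambda\\0&-1\end{smallmatrix}\right)$ with $\abs{\lambda}\le\eps$ --- and $C\sp*$-representability passes to free products by Avitzour's theorem \cite{Avitzour}, as recalled at the end of Section 3. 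A faithful $*$-representation $\pi$ of this free product is automatically completely isometric on the $C\sp*$-algebra $\B$, hence on $\A$, and $S=\pi(s_1s_2)$ has condition number $1+\eps$. Note also that your step (ii) is off even on its own terms: a single $2\times 2$ invertible matrix cannot give a \emph{faithful} representation of the infinite-dimensional $*$-algebra $\D(\Complex[\mathbb{Z}])$; one needs a direct sum over a family of parameters, which is exactly how the paper treats $\D(\Complex[\mathbb{Z}_2])$.
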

\begin{proof}
The first statement is equivalent to $\D(\A)$ being
$O\sp*$-representable which was proved in~\cite{Popovych}.

If $\A$ is an operator algebra then $\A$ is completely isometrically
isomorphic to a subalgebra in a $C\sp*$-algebra $\B$. By
Lemma~\ref{alg}, $*$-algebra $\D(\A)$ is $*$-isomorphic to
$*$-subalgebra in $\B\star \D(\Complex [\mathbb{Z}_2]) \star
\D(\Complex [\mathbb{Z}_2])$. The $*$-algebra $\D(\Complex
[\mathbb{Z}_2])\simeq \Complex \seq{s,s^*| s^2=s^{*2} =e}$ is
$C\sp*$-representable. Indeed, the linear basis of this algebra
consists of the words $e, s, s^*, (ss^*)^k, (s^*s)^k$ where $k\ge
1$. Decomposing arbitrary element by the above basis it is easy to
check that the direct sum of two dimensional representations of the
form
\[s\to
\left(                                                 \begin{array}{cc}
                                                   1 & \lambda \\
                                                   0 & -1 \\
                                                 \end{array}
                                               \right),
\] where $\abs{\lambda}\le \eps$ (for any fixed $\eps>0$) is a faithful $*$-representation.
Thus $\D(\Complex [\mathbb{Z}_2])$ is $C\sp*$-representable. Hence
the free product $\B\star \D(\Complex [\mathbb{Z}_2]) \star
\D(\Complex [\mathbb{Z}_2])$ is also $C\sp*$-representable.

Let  $\pi\colon \B\star \D(\Complex [\mathbb{Z}_2]) \star
\D(\Complex [\mathbb{Z}_2]) \to \BH$ be a faithful
$*$-representation. The algebra  $\A_1 = \pi(\A)$ is completely
isometrically isomorphic to $\A$. Note that under $*$-embedding of
$\D(\A)$ into $\B\star \D(\Complex [\mathbb{Z}_2]) \star \D(\Complex
[\mathbb{Z}_2])$ described in Lemma~\ref{alg} the algebra $\A$ is
mapped onto $(s_1s_2)^{-1} \A (s_1s_2)$. Setting  $S =\pi(s_1s_2)$
we have that the operator algebra $ S^{-1} \A_1 S $ generates
$*$-subalgebra $*$-isomorphic to $\D(\A)$.
\end{proof}

\begin{corollary}\label{link}
An associative algebra $\A$ is isomorphic to a subalgebra in a
$C\sp*$-algebra  if and only if $\D(\A)$ is isomorphic to a
$*$-subalgebra in a $C\sp*$-algebra.
\end{corollary}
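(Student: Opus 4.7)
The plan is to unpack this as a routine consequence of Theorem~\ref{main} together with the universal property of $\D$ from Proposition~\ref{prop}, so neither direction requires new work beyond what is already in place. I would handle the two implications separately.

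For the forward direction, suppose $\A$ is isomorphic to a subalgebra of some $C\sp*$-algebra. Any $C\sp*$-algebra embeds as a $C\sp*$-subalgebra of $\BH$ for a suitable Hilbert space $\h$, so $\A$ is isomorphic to a (norm-closed or not, it does not matter algebraically) subalgebra of $\BH$; in particular it is an operator algebra in the sense used in the paper. I would then invoke the second part of Theorem~\ref{main}: there exist a faithful completely bounded representation $\A\to\A_1\subseteq\BK$ and an invertible $S\in\BK$ such that the $*$-subalgebra of $\BK$ generated by $S^{-1}\A_1 S$ is $*$-isomorphic to $\D(\A)$. Since $\BK$ is a $C\sp*$-algebra, this exhibits $\D(\A)$ as a $*$-subalgebra of a $C\sp*$-algebra.

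For the converse, assume $\D(\A)$ is $*$-isomorphic to a $*$-subalgebra of some $C\sp*$-algebra $C$. By the universal property stated just after the definition of $\D$, the canonical map $i\colon \A\to \D(\A)$ is an injective (unital) homomorphism. Composing $i$ with the given $*$-embedding $\D(\A)\hookrightarrow C$ produces an injective homomorphism $\A\to C$, so $\A$ is isomorphic to a subalgebra of a $C\sp*$-algebra.

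There is essentially no obstacle here; the content is entirely in the preceding results, in particular in Theorem~\ref{main}. The only point to be slightly careful about is the forward implication: one must observe that being algebraically isomorphic to a subalgebra of a $C\sp*$-algebra is the same notion (up to algebra isomorphism) as being an operator algebra in the paper's sense, so that Theorem~\ref{main} is applicable. Once this is noted, the corollary follows in one line in each direction.
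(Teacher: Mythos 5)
Your proof is correct and is exactly the argument the paper intends: the corollary is stated without proof precisely because the forward direction is the second part of Theorem~\ref{main} (applied after identifying $\A$ with a subalgebra of $\BH$, using the functoriality of $\D$ from Proposition~\ref{prop} to transfer along the isomorphism), and the converse is immediate from the injectivity of the canonical embedding $i\colon\A\to\D(\A)$. Your attention to the point that ``isomorphic to a subalgebra of a $C\sp*$-algebra'' coincides with ``operator algebra'' in the paper's sense is the only subtlety, and you handle it correctly.
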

\begin{corollary}
Every unital operator algebra $\A$ is cb isomorphic to an operator
algebra $\B\subset \BH$ such that \begin{enumerate} \item  $\B\cap
\B^* =\Complex e$ (here $\B^*$ is the set of adjoint operators to elements of $\B$).
\item $*$-Algebra generated by $\B$ contains no non-scalar partial isometries.
 \item  If $\abs{b_1} =\abs{b_2}$ for
some $b_1$, $b_2 \in \B\setminus \set{0}$ then  $b_1 = \lambda
b_2$ for some $\lambda \in \Complex$. Here $\abs{b} = \sqrt{b^*b}$.
\end{enumerate}
\end{corollary}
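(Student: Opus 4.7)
The plan is to obtain $\B$ directly from Theorem~\ref{main}. Fix $\eps>0$ and take a completely isometric isomorphism $\phi\colon \A \to \A_1 \subseteq \BH$ together with an invertible $S \in \BH$ satisfying $\norm{S}\norm{S^{-1}} = 1+\eps$, such that the $*$-subalgebra $\W \subseteq \BH$ generated by $\B := S^{-1}\A_1 S$ is $*$-isomorphic to $\D(\A)$. The map $a \mapsto S^{-1}\phi(a)S$ is then a cb isomorphism $\A \to \B$ with cb inverse, because $\phi$ is completely isometric and conjugation by $S$ has cb norm at most $\norm{S}\norm{S^{-1}}$. Tracing through the proof of Theorem~\ref{main} shows that under the $*$-isomorphism $\Phi\colon \W \to \D(\A)$, the subalgebra $\B$ corresponds to the canonical image $i(\A)$, while the set $\B^*$ of Hilbert-space adjoints corresponds to $j(\A^*)$ (recall that in $\D(\A)$ the involution satisfies $i(a)^* = j(\phi(a))$).

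For property~1, I invoke the vector-space decomposition~(\ref{Fock}): writing $\A = \Complex e \oplus \A_0$, the image $i(\A)$ lies in the first two summands $\Complex e \oplus \A_0$, while $j(\A^*)$ lies in the first and third summands $\Complex e \oplus \A_0^*$. Hence $i(\A) \cap j(\A^*) = \Complex e$, so applying $\Phi^{-1}$ yields $\B \cap \B^* = \Complex e$.

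Property~2 follows immediately from the Corollary after Proposition~\ref{square}: since $\W \simeq \D(\A)$ and the latter contains no non-scalar partial isometries, neither does $\W$. For property~3, the equality $\abs{b_1} = \abs{b_2}$ is equivalent to $b_1^*b_1 = b_2^*b_2$. Transporting this equation through $\Phi$ gives $x_1^*x_1 = x_2^*x_2$ with $x_1 := \Phi(b_1)$ and $x_2 := \Phi(b_2)$ both in $i(\A)$. Applying item~3 of Proposition~\ref{square} with $x = x_1 \in \A$ and $y = x_2 \in \D(\A)$ produces $\lambda \in \Complex$ with $\abs{\lambda}=1$ and $x_1 = \lambda x_2$; pulling back via $\Phi^{-1}$ gives $b_1 = \lambda b_2$.

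The substantive work has already been packaged in Theorem~\ref{main} and Proposition~\ref{square}; the only remaining step is the bookkeeping of transporting the conditions between the concrete operator algebra $\W$ and the abstract $*$-double $\D(\A)$. Since $\Phi$ is a $*$-algebra isomorphism (not merely continuous or completely bounded), no completions or norm closures enter, and each translation is purely algebraic, so I expect no genuine obstacle beyond keeping track of the identifications.
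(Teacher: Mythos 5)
Your proposal is correct and takes essentially the same route as the paper: take $\B = S^{-1}\A_1 S$ from Theorem~\ref{main}, observe that the map $a\mapsto S^{-1}\phi(a)S$ is a cb isomorphism, and transport the three properties through the $*$-isomorphism of the generated $*$-algebra with $\D(\A)$, using the decomposition~(\ref{Fock}) for item 1 and Proposition~\ref{square} together with its corollary on partial isometries for items 2 and 3. The paper's own proof is merely a terser version of this, eliding the bookkeeping that you spell out.
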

\begin{proof}
Let $\B$ be as in Theorem~\ref{main}. Then, clearly, $\B\cap\B^* =
\Complex e$. The second statement follows directly from
Proposition~\ref{square}.
%By Theorem~\ref{main} there is operator algebra $\B$ which is cb
%isomorphic to $\A$ such that $\B$ generate $*$-algebra isomorphic to
%$\D(\B)$.
\end{proof}
\begin{remark}
In Theorem~\ref{main} we have shown  that $\D(\A)$ can be considered
as a $*$-subalgebra in $\A \star \D(\Complex [\mathbb{Z}])$. The
problem of extending $*$-homomorphisms into $\BH$  defined on this subalgebra to a $*$-homomorphism of $\A \star \D(\Complex
[\mathbb{Z}])$ is equivalent to the Kadison's Similarity problem.

In 1955 R. Kadison raised the following problem. Is any bounded
homomorphism $\pi$ of a $C^*$-algebra $\mathcal{A}$ into $\BH$
similar to a $*$-representation? The similarity above means that
there exists an invertible operator $S\in \BH$ such that $x\to
S^{-1}\pi(x) S$ is a $*$-representation of $\mathcal{A}$.

 The affirmative answer to the above question is equivalent to the statement that for every $C\sp*$-algebra $\A$ and every $*$-representation $\pi$ of
$\D(\A)$ such that $\pi$ is bounded when restricted to $\A$, $\pi$
has an extension to $*$-representation of $\A\star \D(\Complex
[\mathbb{Z}])$.

Let us note that Kadison's Similarity problem has affirmative answer
for nuclear algebras $\A$ or in the case when $\pi|_\A$
has a cyclic vector (see~\cite{Bunce,Haagerup}). Further results and reformulations of
Kadison's similarity problem can be found in~\cite{Pisier, Hadwin, Jush}.
\end{remark}

One of the main approaches to study an operator algebra $\A$
(initiated by W. Arveson~\cite{Arveson}) prescribes to study those
properties of the $C\sp*$-algebra $C^*(\A)$ generated by the image
of $\A$ under completely isometric embedding $\A\hookrightarrow\BH$
which are invariant with respect to such  embeddings.

However, the $C\sp*$-algebras $C\sp*(\A)$ themselves can be quite
different depending on the embeddings the operator system $\A+ \A^*$
and, in particular, $C\sp*$-algebra $\A\cap \A^*$ are invariants of
completely isometric embeddings. The situation becomes different if
we consider completely bounded embeddings. Theorem~\ref{main} shows
that for every operator algebra $\A$ there exists a completely
bounded embedding (with cb inverse) such that the image $\A_1$ has
trivial intersection with $\A_1^*$, i.e. $\A_1\cap\A_1^* = \Complex
e$.
 We will use this observation to show that in the following theorem one
can not replace completely contractive homomorphism  with completely
isometric one in case of embeddings.

The following result can be found in~\cite{Paulsen}.
\begin{thmp}

Let $\B$ be an operator algebra. Assume that a unital homomorphism
$\rho: \B \to \BH$ is completely bounded. Then there exists
invertible $S\in \BH$ s.t. $S^{-1}\rho(\cdot) S$ is completely
contractive.
\end{thmp}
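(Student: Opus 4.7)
The plan is to reduce Paulsen's similarity theorem to Stinespring's dilation theorem via Paulsen's off-diagonal trick, and then extract the similarity from the homomorphism property of $\rho$.

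First I would embed $\B$ completely isometrically into a unital $C^*$-algebra $\A$ (by BRS). Set $C = \norm{\rho}_{cb}$. Paulsen's off-diagonal lemma asserts that the map
\[
\Phi\colon \begin{pmatrix}\lambda I & b \\ c^* & \mu I\end{pmatrix} \longmapsto \begin{pmatrix}\lambda I & C^{-1}\rho(b) \\ C^{-1}\rho(c)^* & \mu I\end{pmatrix}
\]
is unital completely positive on the operator system $\mathcal{S}\subseteq M_2(\A)$ of matrices with scalar diagonals and $\B$-entries off-diagonal, precisely because $C^{-1}\rho$ is completely contractive.

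Next I would extend $\Phi$ by Arveson's theorem to a unital completely positive map on $M_2(\A)$, and apply Stinespring's theorem to obtain a unital $*$-representation $\Pi\colon M_2(\A)\to B(\mathcal{K})$ with an isometry $W\colon \h\oplus\h\to \mathcal{K}$ realizing this extension. Using the matrix units of $M_2(\A)$, I would decompose $\Pi$ and $W$ into corners to recover a unital $*$-representation $\pi\colon \A\to B(\mathcal{L})$ on a subspace $\mathcal{L}\subseteq\mathcal{K}$ together with isometries $V_1, V_2\colon \h\to \mathcal{L}$ satisfying
\[
C^{-1}\rho(b) = V_1^*\pi(b)V_2 \quad (b\in\B), \qquad V_1^*V_2 = C^{-1}I_\h.
\]

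The hard part will be extracting an invertible $S\in\BH$ from this dilation. Expanding the homomorphism identity $\rho(b_1)\rho(b_2) = \rho(b_1 b_2)$ in terms of the Stinespring data yields $V_1^*\pi(b_1)(I - CV_2V_1^*)\pi(b_2)V_2 = 0$ for all $b_1,b_2\in\B$. The operator $E := CV_2V_1^*$ is an oblique idempotent on $\mathcal{L}$ with range $V_2(\h)$, and the identity above is equivalent to the semi-invariance relation $E\pi(b_1)(I-E)\pi(b_2)E = 0$; in particular, $b\mapsto E\pi(b)|_{V_2(\h)}$ is a homomorphism. Since $V_2$ is an isometry onto the closed subspace $V_2(\h)\subseteq \mathcal{L}$, the oblique-projection structure can be converted into an orthogonal-projection structure by passing to an equivalent inner product on $\mathcal{L}$ (equivalently, by a suitable positive invertible similarity). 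Pulling the resulting orthogonal compression back to $\h$ via $V_2$ should produce the required invertible $S\in\BH$, with $S^{-1}\rho(\cdot)S$ realized as an orthogonal compression of $\pi$ to a semi-invariant subspace, hence a completely contractive homomorphism (completely contractive because $\pi$ is a $*$-representation and the compressing map is an isometry; a homomorphism by Sarason's theorem). The main obstacle is precisely this passage from the oblique idempotent to an orthogonal projection, together with the norm control $\norm{S^{-1}}\norm{S}\le C$. The homomorphism hypothesis on $\rho$ is essential throughout; without it one recovers only the Wittstock decomposition $\rho = V_1^*\pi(\cdot)V_2$, which yields no similarity in general.
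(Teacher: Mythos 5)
Your steps up to and including the semi-invariance identity are correct and standard (this is how Paulsen's theorem is proved in the literature; note the paper itself does not prove this statement, it only cites \cite{Paulsen}): the off-diagonal trick, Arveson extension and Stinespring do give isometries $V_1,V_2\colon\h\to\mathcal{L}$ and a unital $*$-representation $\pi$ with $C^{-1}\rho(b)=V_1^*\pi(b)V_2$ and $V_1^*V_2=C^{-1}I$, the operator $E=CV_2V_1^*$ is an idempotent with range $V_2(\h)$, and multiplicativity of $\rho$ is equivalent to $E\pi(b_1)(I-E)\pi(b_2)E=0$. The genuine gap is the final step. You can certainly renorm $\mathcal{L}$ so that $E$ becomes an orthogonal projection (e.g.\ with $G=E^*E+(I-E)^*(I-E)$, which is positive, invertible and satisfies $GE=E^*G$), and Sarason then makes the compression to $\mathrm{ran}\,E$ a homomorphism; but in the new inner product $\pi$ is \emph{no longer a $*$-representation} --- it becomes the homomorphism $G^{1/2}\pi(\cdot)G^{-1/2}$, whose cb norm is controlled only by $\norm{G^{1/2}}\,\norm{G^{-1/2}}$. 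So your parenthetical ``completely contractive because $\pi$ is a $*$-representation and the compressing map is an isometry'' does not follow: one cannot have both $E$ orthogonal and $\pi$ a $*$-representation unless the renorming operator can be taken in the commutant of $\pi$, for which there is no reason. In fact the oblique compression of $\pi$ to $\mathrm{ran}\,E$, pulled back by $V_2$, is exactly $\rho$ itself ($V_2^*E\pi(b)V_2=CV_1^*\pi(b)V_2=\rho(b)$), so that compression is precisely as far from completely contractive as $\rho$ is; repairing this by a renorming of the \emph{same} subspace is the whole content of the theorem, and your argument is circular at exactly this point.

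The correct finish passes to a different subspace instead of renorming. Let $\mathcal{M}=\overline{\pi(\B)V_2\h}$, a $\pi(\B)$-invariant subspace, and let $R=CV_1^*|_{\mathcal{M}}\colon\mathcal{M}\to\h$. Multiplicativity of $\rho$ gives the intertwining relation $R\,\pi(b)|_{\mathcal{M}}=\rho(b)R$, hence $\mathcal{N}=\ker R=\mathcal{M}\cap\ker V_1^*$ is also $\pi(\B)$-invariant. Therefore $\h_1=\mathcal{M}\ominus\mathcal{N}$ is semi-invariant \emph{in the orthogonal sense}, so the compression $\sigma(b)=P_{\h_1}\pi(b)|_{\h_1}$ is a homomorphism (Sarason) and is completely contractive because it is an orthogonal compression of a genuine $*$-representation. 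Moreover $R|_{\h_1}\colon\h_1\to\h$ is a bounded bijection intertwining $\sigma$ with $\rho$: it is onto because $RV_2=CV_1^*V_2=I$ and $R$ kills $\mathcal{N}$, and injective because $\h_1\cap\ker R=\{0\}$; also $\norm{R}\le C$ while $R\,(P_{\h_1}V_2)=I$ with $\norm{P_{\h_1}V_2}\le 1$. Composing $R|_{\h_1}$ with a unitary identification of $\h_1$ with $\h$ yields an invertible $S\in\BH$ with $S^{-1}\rho(\cdot)S$ completely contractive and $\norm{S}\,\norm{S^{-1}}\le C=\norm{\rho}_{cb}$, which is the norm control you wanted but could not reach through the renorming route.
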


If in the above theorem $\B$ is a $C^*$-algebra and $\rho$ is
injective then $S^{-1}\rho(\cdot) S$ is necessarily completely
isometric. The following example shows that this is no longer true
for general operator algebras.

\begin{example}
Let $\A$ be the algebra of lower triangular matrices $T_n(\Complex)$
($n\ge 2$). By Theorem~\ref{main} there exists an operator algebra
$\B\in\BH$  s.t. $\B\cap \B^*= \Complex e$ and a completely bounded
unital isomorphism $\phi\colon \B\to\A$ with cb inverse. We want to
show that there is no invertible $S\in M_n(\Complex)$ s.t.
$S^{-1}\phi(\cdot) S$ is completely isometric. If such $S$ exists
then $$(S^{-1}\A S)\cap (S^{-1}\A S)^* = \Complex e.$$

Take a matrix $Z\in (S^{-1}\A S)\cap (S^{-1}\A S)^*$. Then there are
lower triangular matrices $A$, $B$ s.t. $Z = S^{-1} A S = S^* B^*
S^{*-1}$. Note that $Z$ is a scalar multiple of the identity matrix
iff $A=B^*$ and $A$ is a scalar multiple of the identity matrix.
With $C=S^*S$ we have \begin{equation}\label{eqv}
 CB^* C^{-1} = A.
\end{equation}
Consider Cholesky decomposition of the positive matrix $C$,  $C=
LL^*$ where $L$ is a lower triangular matrix with positive elements
on the diagonal. Equation~(\ref{eqv}) is equivalent to $$ L^* B^*
L^{*-1} = L^{-1}A L.$$
 Take arbitrary diagonal matrix $T =\diag(d_1, \ldots, d_n)$ with at
least two distinct entries. Then the lower triangular matrices $A=
LTL^{-1}$ and $B=L T^*L^{-1}$ are not scalar multiples of the
identity matrix and they satisfy~(\ref{eqv}). Thus $(S^{-1}\A S)\cap
(S^{-1}\A S)^* \not=\Complex e$ for any $S$.
\end{example}
 In the above example we can consider bounded lower triangular
operators on infinite dimensional Hilbert space instead of
$T_n(\Complex)$.

Another consequence of Theorem~\ref{main} is that the $*$-doubles of
$C\sp*$-algebras possesses two natural structures of
$C\sp*$-algebras.
 Fix an anti-homomorphism $\phi\colon \A\to \A^*$.
If $\A$ is a $*$-algebra then $\A * \A^*$ possesses two involutions.
The first one is given by $a^\sharp =\phi(a)$, $b^\sharp =
\phi^{-1}(b)$ for $a\in \A$ and $b\in \A^*$ and the second one by
$a^\star = a^*$, $b^\star = \phi(\phi^{-1}(b)^*)$. With the first
involution $\A * \A^*$ is the $*$-double $\D(\A)$ whereas with the
second one we get the free product of $*$-algebras  $\A \star \A$.
Let $\psi\colon\A
* \A^* \to \A
* \A^*$ be an isomorphism such that $\psi(a) = \phi(a^*)$, $\psi(b)
= \phi^{-1}(b)^*$. By direct verification we get the following
proposition.
\begin{proposition}
The following properties hold.
\begin{enumerate}
\item For all $x\in \A * \A^*$, $ \psi(x^\sharp) = x^\star$, $\psi(x^\star) =
x^\sharp$.
\item $\psi \circ \psi = id$.
\item $\psi$ is $*$-automorphism of $\A * \A^*$ with respect to each of the involutions $\sharp$ and $\star$.
\end{enumerate}
\end{proposition}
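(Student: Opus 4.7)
The entire proposition is a direct verification whose only subtle point is keeping track of which maps are linear versus conjugate-linear and which are homomorphisms versus anti-homomorphisms. My plan is to reduce every claim to a computation on generators by invoking the universal property of the free product $\A * \A^*$.

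First I would check that $\psi$ is a well-defined algebra endomorphism. The $*$-involution on $\A$ and the anti-isomorphism $\phi$ are both conjugate-linear anti-homomorphisms, so the restriction $\psi|_\A = \phi \circ *\colon \A \to \A^*$ is a complex-linear algebra homomorphism sending the unit to the unit; the same holds for $\psi|_{\A^*} = * \circ \phi^{-1}\colon \A^* \to \A$. The universal property of the free product then yields a unique algebra endomorphism $\psi$ of $\A * \A^*$. Item 2 would follow by computing $\psi^2$ on each generator: for $a \in \A$, $\psi^2(a) = \phi^{-1}(\phi(a^*))^* = a$, and for $b \in \A^*$, $\psi^2(b) = \phi((\phi^{-1}(b)^*)^*) = b$. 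As both $\psi^2$ and the identity are algebra homomorphisms agreeing on a generating set, they coincide on $\A * \A^*$; in particular $\psi$ is an automorphism.

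For item 1 I observe that $\psi \circ \sharp$ and $\star$ are both conjugate-linear anti-homomorphisms of $\A * \A^*$ (since $\psi$ is a linear homomorphism while $\sharp$ and $\star$ are conjugate-linear anti-homomorphisms), so the equality $\psi(x^\sharp) = x^\star$ need only be checked on generators $a \in \A$ and $b \in \A^*$---a one-line calculation each that unwinds the definitions of $\sharp$, $\star$, and $\psi$. The identity $\psi(x^\star) = x^\sharp$ then follows by applying $\psi$ to $\psi(x^\sharp) = x^\star$ and using $\psi^2 = id$. For item 3, the statement that $\psi$ is a $*$-automorphism with respect to $\sharp$ is $\psi \circ \sharp = \sharp \circ \psi$, and since item 1 already gives $\psi \circ \sharp = \star$, this reduces to $\sharp \circ \psi = \star$, again a generator-level check (both sides conjugate-linear anti-homomorphisms); the case of $\star$ in place of $\sharp$ is entirely parallel, or may be deduced from item 1 and $\psi^2 = id$ by symmetry.

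There is no real obstacle, only bookkeeping. The main trap to guard against is inadvertently dropping or inserting a complex conjugation when composing several conjugate-linear maps. Tagging each map with its type (linear or conjugate-linear; homomorphism or anti-homomorphism) before starting each computation makes transparent why ``agreement on generators implies agreement on $\A * \A^*$'' applies in every step and eliminates any sign or conjugation mistake.
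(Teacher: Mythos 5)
Your proof is correct and is essentially the paper's own argument: the paper offers no written proof beyond the remark that the proposition holds ``by direct verification,'' and your generator-level computations --- invoking the universal property of the free product to define $\psi$, checking $\psi^2 = id$ on $\A \cup \A^*$, and reducing items 1 and 3 to generator checks because all the relevant maps are conjugate-linear anti-homomorphisms --- are exactly that verification, carried out with the linearity/anti-linearity bookkeeping made explicit. No gaps.
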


\begin{proposition}
Let $\A$ be a $C\sp*$-algebra and  $\B= \A * \A^*$. Then there is a
pre-$C\sp*$-norm $\norm{\cdot}_1$ on $(\B, \star)$ and
pre-$C\sp*$-norm $\norm{\cdot}_2$ on $(\B, \sharp)$ such that for
all $x\in\B$, $\norm{x^\star}_1 = \norm{x^\sharp}_1 =\norm{x}_1$ and
$\norm{x^\star}_2 = \norm{x^\sharp}_2 =\norm{x}_2$.
\end{proposition}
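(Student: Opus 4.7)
The plan is to build each norm from a faithful $*$-representation of the corresponding $*$-algebra structure on $\B$, and then symmetrize by the automorphism $\psi$ so that the resulting norm is invariant under the other involution as well. The key input from the preceding proposition is that $\psi$ is simultaneously a $*$-automorphism of $(\B,\sharp)$ and of $(\B,\star)$, is an involution, and interchanges $\sharp$ and $\star$ on individual elements.

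First I would establish a pre-$C\sp*$-norm for each structure in isolation. For $(\B,\star)=\A\star\A$, since $\A$ is a $C\sp*$-algebra, the free product of two $C\sp*$-algebras is $C\sp*$-representable (as recalled in the paragraph following Theorem~\ref{onecone}), which yields a faithful $*$-representation $\pi_1\colon(\B,\star)\to\BOP(\h_1)$. For $(\B,\sharp)=\D(\A)$, a $C\sp*$-algebra is in particular an operator algebra, so Theorem~\ref{main} furnishes a faithful $*$-representation $\pi_2\colon(\B,\sharp)\to\BOP(\h_2)$. By item~3 of the preceding proposition, $\pi_i\circ\psi$ is also a $*$-representation of the $i$th structure, so I would set
\[
\norm{x}_i:=\norm{\pi_i(x)\oplus\pi_i(\psi(x))},\qquad i=1,2.
\]
This is a pre-$C\sp*$-norm on $(\B,\star)$ respectively $(\B,\sharp)$, because it comes from the direct-sum $*$-representation $\pi_i\oplus(\pi_i\circ\psi)$ and $\pi_i$ is faithful.

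To conclude I would verify the cross-invariance. Using $\psi^2=\mathrm{id}$ (item~2), each norm is $\psi$-invariant: $\norm{\psi(x)}_i=\max(\norm{\pi_i(\psi(x))},\norm{\pi_i(x)})=\norm{x}_i$. Combining with item~1 gives, for instance,
\[
\norm{x^\sharp}_1=\norm{\psi(x^\sharp)}_1=\norm{x^\star}_1=\norm{x}_1,
\]
where the first equality is $\psi$-invariance, the second uses $\psi(x^\sharp)=x^\star$, and the third is the $C\sp*$-identity $\norm{y^\star}_1=\norm{y}_1$ of $\norm{\cdot}_1$ on $(\B,\star)$; the symmetric chain produces $\norm{x^\star}_2=\norm{x^\sharp}_2=\norm{x}_2$. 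There is no real obstacle once the two input representations are at hand: the preceding proposition was arranged so that averaging against $\psi$ is precisely the operation that converts a norm respecting one involution into one respecting both.
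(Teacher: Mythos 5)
Your proof is correct, and it diverges from the paper's in one half of the argument. For $\norm{\cdot}_2$ on $(\B,\sharp)$ you do exactly what the paper does: take the faithful $*$-representation $\pi$ of $\D(\A)$ supplied by Theorem~\ref{main}, note that $\pi\circ\psi$ is again a $*$-representation, and symmetrize via $\norm{b}_2=\max(\norm{\pi(b)},\norm{\pi(\psi(b))})$ (your direct sum $\pi\oplus(\pi\circ\psi)$ gives the identical norm). For $\norm{\cdot}_1$ on $(\B,\star)$, however, the paper does not symmetrize: it invokes the universal enveloping $C\sp*$-algebra $C\sp*(\B)$ of $(\B,\star)$, which exists because $\A\star\A$ is generated by unitaries (so the universal $C\sp*$-seminorm is finite), is faithful on $\B$ by $C\sp*$-representability of the free product, and then gets $\psi$-invariance for free because $\psi$ extends to a $*$-automorphism of $C\sp*(\B)$, and $*$-automorphisms of $C\sp*$-algebras are isometric. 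You instead run the same symmetrization trick a second time, starting from any faithful $*$-representation of $\A\star\A$ (Avitzour, as recalled after Theorem~\ref{onecone}). Your route is more uniform and slightly more elementary, since it sidesteps the existence of the enveloping $C\sp*$-algebra (i.e.\ the unitary-generation/algebraic-boundedness step); the paper's route yields a canonical choice for $\norm{\cdot}_1$ — the maximal $C\sp*$-norm — whose invariance under $\psi$ is automatic rather than engineered. The final cross-invariance chain, $\norm{x^\sharp}_1=\norm{\psi(x^\sharp)}_1=\norm{x^\star}_1=\norm{x}_1$ and its mirror image for $\norm{\cdot}_2$, is the same in both arguments and you execute it correctly, using precisely the three items of the preceding proposition.
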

\begin{proof}
Clearly $(\A * \A^*, \star)$ is $*$-isomorphic to the free product
$\A \star \A$ of $*$-algebras. The $*$-algebra $\A \star \A$ is
$*$-isomorphic to a $*$-subalgebra in some $\BH$ as  noted at the
end of section \ref{faithdb}. Moreover, since any $C\sp*$-algebra is
generated by the unitary elements the same holds for $\A\star\A$ and
hence there exists the universal enveloping $C\sp*$-algebra
$C\sp*(\B)$ of $(\B, \star)$. The canonical homomorphism from $(\B,
\star)$ to $C\sp*(\B)$ is injective and thus induces a
pre-$C\sp*$-norm $\norm{\cdot}_1$ on $(\B, \star)$. Since $\psi$ is
a $*$-automorphism of $(\B, \star)$ it extends to a $*$-automorphism
of $C\sp*(\B)$ and hence is isometric. From this follows that
$\norm{x^\sharp}_1= \norm{\psi(x^\star)}_1 =\norm{x}_1$ for all
$x\in \B$.

By Theorem~\ref{main} there is a faithful $*$-representation $\pi:
(\B,\sharp)\to \BH$. Clearly, $\pi\circ\psi$ is also
$*$-representation. Hence $\norm{b}_2 = \max(\norm{\pi(b)},
\norm{\pi(\psi(b))})$ is pre-$C\sp*$-norm on $(\B,\sharp)$.
Moreover, \begin{eqnarray*} \norm{b^\star}_2 &=& \norm{\psi(b^*)}_2
 =\max(\norm{\pi(\psi(b^*))}, \norm{\pi(\psi\circ\psi(b^*))})
\\ &=&\norm{b^*}_2
\end{eqnarray*} since $\psi\circ\psi = id$.
\end{proof}

\section{The maximal $C^*$-subalgebras of operator algebras.}
Let $\A$ be an operator algebra. If $\phi\colon\A\to \B$  is an
injective homomorphism onto the operator algebra $\B\subseteq \BH$
such that $\norm{\phi}_{cb}<\infty$ and
$\norm{\phi^{-1}}_{cb}<\infty$ then $\A$ and $\B$ will be called cb
isomorphic. Subalgebra $\B\cap \B^*$ is a $C^*$-algebra and we
denote by $\A_{\phi}$ its pre-image $\phi^{-1}(\B\cap \B^*)$. Denote
by $\Phi(\A)$ the set of all such $\phi$ and by $\Phi(\A)_r$ its
subset of those $\phi$ such that $\norm{\phi}_{cb}\le r$ and $
\norm{\phi^{-1}}_{cb}\le r$.

We are interested in a description of all possible subalgebras of
$\A$ of the form $\A_\phi$. The following proposition shows that a
description of the maximal subalgebras of this form should be
important.

\begin{proposition}
If $\A\subseteq \BH$ is an operator algebra then for every von
Neumann algebra $\mathcal{W}$ of $\BH$  and every $r>1$ there exists
$\phi\in \Phi_r(\A)$ such that $$\A_\phi = \A\cap \A^* \cap
\mathcal{W}.$$
\end{proposition}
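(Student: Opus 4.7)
The plan is to generalise Theorem~\ref{main} by amalgamating the construction over $\C := \A \cap \A^* \cap \W$ instead of over $\Complex e$. The subset $\C$ is a unital $C^*$-subalgebra of $\W$ (and of $\BH$). I form the amalgamated free product of $*$-algebras $\A *_\C \A^*$, in which $\A^*$ is the conjugate algebra and $\C \subseteq \A$ is identified with $\phi(\C) \subseteq \A^*$ using that $\phi|_\C$ coincides with the involution of $\C$. A normal form argument parallel to the one giving (\ref{Fock}) shows that the canonical maps $\A \hookrightarrow \A *_\C \A^*$ and $\A^* \hookrightarrow \A *_\C \A^*$ are injective and that $\A \cap \A^* = \C$ inside $\A *_\C \A^*$.

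Next, choose a $C^*$-algebra $\M$ containing $\A$ completely isometrically, so $\C$ is a common unital $C^*$-subalgebra of $\A$ and $\M$. Choose a unital $C^*$-algebra $\mathcal{G}$ containing $\C$ in its centre together with an invertible element $g \in \mathcal{G}$ with $\|g\|\|g^{-1}\| \le r$; concretely, take $\mathcal{G} := \C \otimes_{\min} N$ and $g := 1 \otimes (s_1 s_2)$, where $N$ is any $C^*$-algebra in which two near-identity involutions $s_1, s_2$ as in the proof of Theorem~\ref{main} are faithfully represented. Form the amalgamated free product of unital $C^*$-algebras $\M *_\C \mathcal{G}$; by the general theory of such amalgamated products, $\M$ embeds in $\M *_\C \mathcal{G}$ completely isometrically, and $g$ commutes with $\C$ inside $\M *_\C \mathcal{G}$ since it does inside $\mathcal{G}$.

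Define a $*$-homomorphism $\pi : \A *_\C \A^* \to \M *_\C \mathcal{G}$ by $\pi(a) = g^{-1} a g$ for $a \in \A$, extended through the involution so that $\pi(a^*) = g^* a^* g^{*-1}$. Consistency with the amalgamation holds because $g^{-1} c g = c = g^* c g^{*-1}$ for every $c \in \C$. Let $\rho$ be a faithful $*$-representation of $\M *_\C \mathcal{G}$ on a Hilbert space $\h'$, put $S := \rho(g)$, and define $\phi : \A \to \BOP(\h')$ by $\phi(a) := S^{-1} \rho(a) S$. Then $\phi$ is a unital injective algebra homomorphism with $\|\phi\|_{cb}, \|\phi^{-1}\|_{cb} \le \|S\|\|S^{-1}\| \le r$, so $\phi \in \Phi_r(\A)$; moreover $\phi|_\C = \rho|_\C$ is completely isometric and $*$-preserving because $S$ commutes with $\rho(\C)$. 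Setting $\B := \phi(\A)$, the injectivity of $\rho \circ \pi$ on $\A *_\C \A^*$ yields
\begin{equation*}
\B \cap \B^* \;=\; (\rho \circ \pi)(\A) \cap (\rho \circ \pi)(\A)^* \;=\; (\rho \circ \pi)(\A \cap \A^*) \;=\; (\rho \circ \pi)(\C) \;=\; \phi(\C),
\end{equation*}
so $\A_\phi = \phi^{-1}(\B \cap \B^*) = \C = \A \cap \A^* \cap \W$, as required.

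The main technical obstacle is the injectivity of $\pi$. This should be handled by an amalgamated analogue of the Gr\"obner basis argument used in Lemma~\ref{alg}: the basis words of the form $g^{-1} u_1 g \cdot g^* w_1 g^{*-1} \cdots g^{-1} u_n g \cdot g^* w_n g^{*-1}$, with $u_i, w_i$ ranging over a fixed set of coset representatives of $\A$ modulo $\C$, must be shown to remain linearly independent in $\M *_\C \mathcal{G}$, so that distinct reduced expressions in $\A *_\C \A^*$ separate under $\pi$. Once this is in hand, the rest of the argument (existence of the amalgamated free product of unital $C^*$-algebras and the cb bounds from conjugation by $S$) is standard.
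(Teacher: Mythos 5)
There is a genuine gap, and it sits exactly where you flag it: the injectivity of $\pi\colon \A *_\C \A^* \to \M *_\C \mathcal{G}$ is not a technicality that ``should'' follow from an amalgamated analogue of the Gr\"obner basis argument, because no such theory exists to invoke. The Gr\"obner basis machinery of Lemma~\ref{alg} and the appendix is a theory of free algebras over the field $\Complex$: basis words, leading words and compositions all presuppose that the coefficients form a field commuting with the variables. Over the coefficient algebra $\C = \A\cap\A^*\cap\W$, which is in general an infinite-dimensional noncommutative $C\sp*$-algebra, none of this applies, so the central pillar of your argument is precisely what would have to be invented. The trouble in fact starts earlier: your opening claim that ``a normal form argument parallel to the one giving (\ref{Fock})'' shows $\A\hookrightarrow \A *_\C \A^*$ and $\A\cap\A^* = \C$ inside $\A *_\C \A^*$ is also unjustified. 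The decomposition (\ref{Fock}) exists because $\Complex e$ always admits a linear complement $\A_0$; the analogous Fock-type decomposition over $\C$ requires a $\C$-bimodule complement of $\C$ in $\A$ (equivalently, suitable freeness or flatness of $\A$ as a $\C$-module), and without such a hypothesis amalgamated free products of rings can collapse --- the canonical maps of the factors need not even be injective. Finally, you also lean on the existence of the full $C\sp*$-amalgamated free product $\M *_\C \mathcal{G}$ with completely isometric factor embeddings, and you need it to contain the algebraic amalgamated product faithfully; in the absence of conditional expectations onto $\C$ these are delicate matters, not background facts. So three steps are missing, and the one you call the main obstacle is the hardest of them.

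The statement does not need any of this machinery; the paper's proof is elementary and never leaves $\BH$. One picks a family of positive invertible operators $C_\alpha$ generating the commutant $\W'$ as a von Neumann algebra, written as $C_\alpha = S_\alpha S_\alpha^*$ with $\norm{S_\alpha}\,\norm{S_\alpha^{-1}}\le r$, and defines $\phi(x) = x\oplus\bigoplus_\alpha S_\alpha^{-1} x S_\alpha$ on $\h\oplus\bigoplus_\alpha\h_\alpha$. The identity summand makes $\phi^{-1}$ completely contractive, so $\phi\in\Phi_r(\A)$; moreover $\phi(x)$ is self-adjoint exactly when $x=x^*$ and $x$ commutes with every $C_\alpha$, and since $\phi(\A)\cap\phi(\A)^*$ is spanned by its self-adjoint elements, the bicommutant theorem yields $\A_\phi = \A\cap\A^*\cap\set{C_\alpha}' = \A\cap\A^*\cap\W$. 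In effect you set out to prove an amalgamated strengthening of Theorem~\ref{main}, which may be an interesting question in its own right, but it is far stronger (and far harder) than what the proposition asks for; a direct sum of conjugations inside $\BH$ already does the job.
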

\begin{proof}
Fix $r>1$. Consider a collection $\set{S_\alpha}\subseteq \BH$ such
that $\norm{S_\alpha} \norm{S_\alpha^{-1}} \le r$ for every $\alpha$
and put $\mathcal{K} = \h\oplus \oplus_{\alpha} \h_\alpha$ where $
\h_\alpha$ is a copy of $\h$. Define $$ \phi(x) = x\oplus
\oplus_{\alpha}S_\alpha^{-1} x S_\alpha.$$ For $x\in \A$ the
operator $\phi(x)$ is self-adjoint iff $x^*=x$ and $C_\alpha x = x
C_\alpha$ for all $\alpha$ and $C_\alpha = S_\alpha S_\alpha^*$.
Since $\phi(\A)\cap \phi(\A)^*$ is a linear span of its self-adjoint
elements we get $\A_\phi = \A\cap \A^* \cap \set{C_\alpha}'$ where
$\set{C_\alpha}'$ denotes the commutant of $\set{C_\alpha}$.

For any von Neumann algebra $\W$ its commutant $\W'$ is a von
Neumann algebra and thus it is generated by its self-adjoint
elements $C$ such that $\frac{1}{r}I \le C\le r^2 I$. Hence $\W' =
\set{C_\alpha}_\alpha$ for some family such that $C_\alpha =
S_\alpha S_\alpha^{*}$ and $\norm{S_\alpha} \norm{S_\alpha^{-1}} \le
r$. By von Neumann bicommutant theorem $\set{C_\alpha}'_\alpha =\W$.
Hence $\A_\phi = \A\cap \A^* \cap \W$.
\end{proof}

A maximal subalgebra $A_\psi$ in the family $\set{\A_\phi| \phi\in
\Phi(\A)}$ will be called $\Phi$-maximal and in the family
$\set{\A_\phi| \phi\in \Phi_r(\A)}$ will be called $\Phi_r$-maximal.
The homomorphism  $\psi$ as above will be called $\Phi$-maximal or
respectively $\Phi_r$-maximal. The existence of $\Phi$- or
$\Phi_r$-maximal subalgebras is not obvious. If $\A$ is finite
dimensional then clearly $\Phi$-maximal and $\Phi_r$-maximal
algebras exists. In infinite dimensional case we have the following:
\begin{theorem}\label{ultra}
Let $\A$  be a separable operator algebra,  $r>0$  and $\psi\in
\Phi_r(\A)$ then there exists $\phi\in \Phi_r(\A)$ such that
$\A_\psi \subseteq \A_\phi$ and $\A_\phi$ is $\Phi_r$-maximal.
\end{theorem}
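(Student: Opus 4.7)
\emph{Proof plan.} I would apply Zorn's lemma to the poset $\mathcal{G}=\set{\phi\in\Phi_r(\A):\A_\psi\subseteq\A_\phi}$ ordered by $\phi\preceq\phi'$ iff $\A_\phi\subseteq\A_{\phi'}$. Since $\psi\in\mathcal{G}$ the set is nonempty, so it suffices to exhibit an upper bound in $\mathcal{G}$ for every chain; the theorem follows.

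Each $\A_\phi=\phi^{-1}(\B\cap\B^*)$ is norm-closed in $\A$ (since $\phi$ and $\phi^{-1}$ are bounded and $\B\cap\B^*$ is a $C\sp*$-subalgebra of $\B$), so any chain in $\mathcal{G}$ is a well-ordered family of norm-closed subspaces of the separable metric space $\A$. A standard argument---associate to each strict jump an open ball $B(d,\rho)$ centred at a point $d$ of a fixed countable dense subset and of rational radius $\rho$ such that $B(d,\rho)$ contains a witness point of the jump but is disjoint from the smaller closed set, and note that distinct jumps force distinct pairs $(d,\rho)$---shows the chain has at most countably many jumps, so we may reindex cofinally as $\phi_1\preceq\phi_2\preceq\ldots$

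Write $\phi_n\colon\A\to\B_n\subseteq\BOP(\h_n)$, fix a free ultrafilter $\mathcal{U}$ on $\mathbb{N}$, and embed the $C\sp*$-algebra ultraproduct $\prod_\mathcal{U}\BOP(\h_n)$ faithfully into some $\BOP(\h)$; then $\phi(a):=[(\phi_n(a))]_\mathcal{U}$ defines a homomorphism $\A\to\BOP(\h)$. The ultraproduct identity $\norm{\phi^{(k)}(x)}=\lim_{n,\mathcal{U}}\norm{\phi_n^{(k)}(x)}$, together with the two-sided bound $r^{-1}\norm{x}\le\norm{\phi_n^{(k)}(x)}\le r\norm{x}$, forces $\norm{\phi}_{cb}\le r$, $\norm{\phi^{-1}}_{cb}\le r$, and injectivity of $\phi$; hence $\phi\in\Phi_r(\A)$.

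The main obstacle is to verify $\A_{\phi_k}\subseteq\A_\phi$ for each $k$. Given $a\in\A_{\phi_k}$, for every $m\ge k$ there exists $b_m\in\A$ with $\phi_m(b_m)=\phi_m(a)^*$ and $\norm{b_m}\le r^2\norm{a}$; what is needed is a single $b\in\A$ satisfying $\phi(b)=\phi(a)^*$, equivalently $\lim_{m,\mathcal{U}}\norm{b_m-b}=0$. Because the bounded ball of $\A$ is not norm-compact, this is not automatic from the ultraproduct alone, and this is the technical core. I would force it by refining the chain, before passing to the ultraproduct, so that the involutions $\sigma_n(a):=\phi_n^{-1}(\phi_n(a)^*)$ on the subalgebras $\A_{\phi_n}$ are coherent, i.e.\ $\sigma_m|_{\A_{\phi_n}}=\sigma_n$ whenever $m\ge n$. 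Such coherence can be sought by conjugating each $\phi_{n+1}$ with an invertible of cb-distortion $1+\eps_n$, the sequence $\eps_n$ summable and arranged (using the slack provided by Paulsen's similarity theorem and by the freedom in Theorem~\ref{main} to take $\norm{S}\norm{S^{-1}}$ arbitrarily close to $1$) so that the total distortion keeps us inside $\Phi_r(\A)$. Under coherence, $b_m=\sigma_k(a)$ is independent of $m\ge k$, so $b:=\sigma_k(a)\in\A$ does the job; $\phi$ is then the desired upper bound, and Zorn yields the required $\Phi_r$-maximal $\phi\succeq\psi$.
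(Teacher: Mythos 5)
Your skeleton---Zorn's lemma, reduction to a countable increasing sequence via separability, the diagonal map into a $C\sp*$-ultraproduct, and the passage of the cb bounds through the ultrafilter limit---is exactly the paper's argument, and you have correctly located the crux: proving $\A_{\phi_k}\subseteq\A_\phi$. But your resolution of that crux has a genuine gap. To make $T^{-1}\bigl(\phi_{n+1}\circ\phi_n^{-1}\bigr)(\cdot)T$ a $*$-homomorphism on the $C\sp*$-algebra $\phi_n(\A_{\phi_n})=\B_n\cap\B_n^*$, you need (since $*$-homomorphisms are completely contractive) $\norm{T}\norm{T^{-1}}\ge \norm{(\phi_{n+1}\circ\phi_n^{-1})|_{\B_n\cap\B_n^*}}_{cb}$, and this cb norm is only bounded by $r^2$; nothing whatsoever makes it $1+\eps_n$. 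The paper's own lemma on the algebras $V_c$ (any $\phi$ with $\A_\phi=V_c$ satisfies $\norm{\phi}\norm{\phi^{-1}}\ge 1/\abs{c}$) shows that distortions bounded away from $1$ really do occur, and the ``$\norm{S}\norm{S^{-1}}=1+\eps$'' freedom in Theorem~\ref{main} concerns representing $\D(\A)$ of a fixed algebra, not straightening a chain handed to you; it gives no leverage here. So after a single conjugation your modified map can have cb norm near $r^3$ and you are outside $\Phi_r(\A)$---fatal in a theorem whose entire content is staying inside the fixed bound $r$. A second, structural problem: replacing $\phi_{n+1}$ by $T^{-1}\phi_{n+1}(\cdot)T$ changes the subalgebra $\A_{\phi_{n+1}}$ itself, so the ``refined'' sequence need not be a chain anymore (there is no reason why the new $\A_{\phi_{n+1}'}$ should sit inside $\A_{\phi_{n+2}}$), and the induction you envision cannot even be set up.

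The idea you are missing is that the paper never produces a preimage $b$ of $\phi(a)^*$ at all. It uses the fact that self-adjointness in a $C\sp*$-algebra is a purely metric notion: $x=x^*$ if and only if $\norm{\exp(i\alpha x)}=1$ for every real $\alpha$ (see~\cite[Prop.~44.1]{DoranBelfi}). Since $\A_{\phi_n}$ is spanned by those $a$ with $\phi_n(a)^*=\phi_n(a)$, and since in the ultraproduct one has $\exp(i\alpha\,\phi(a))=(\exp(i\alpha\,\phi_m(a)))_m$ with $\norm{x}=\lim_{\F}\norm{x_m}$, the inclusion $\A_{\phi_n}\subseteq\A_\phi$ reduces to the norm identities $\norm{\exp(i\alpha\,\phi_m(a))}=1$ for $m\ge n$, which the paper extracts from the increasing-chain hypothesis. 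No coherent system of involutions $\sigma_n$, and no norm convergence of your $b_m$, is needed: the involution-free characterization of self-adjointness is precisely the device that survives the ultrafilter limit, exactly where the compactness you (correctly) observe to be missing would otherwise be required.
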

\begin{proof}
By Zorn's lemma and because $\A$ is separable it is suffices to show
that for any increasing sequence $\A_{\phi_n}\subseteq
\A_{\phi_{n+1}}$, where $\phi_k \in \Phi_r(\A)$ there exists
$\phi\in  \Phi_r(\A)$ such that $\A_{\phi_n}\subseteq \A_{\phi}$ for
all $n\ge 1$.

Let $\B_n = \phi_n(\A)$ and $\C_n = C^*(\B_n)$. Let $\F$ be any free
ultrafilter on $\mathbb{N}$. Consider ultraproduct $\mathcal{D} =
\prod_{\F} \C_n$ which is a quotient of $l^\infty$-direct product
$\prod_{i=1}^\infty \C_n$ by the ideal $\J = \set{(x_n)_{n=1}^\infty
| \lim \limits_{\F} \norm{x_n} =0 }$. It is known that ultraproducts of
 $C\sp*$-algebras are again $C\sp*$-algebras. Consider the homomorphism
$\psi\colon \A \to \mathcal{D}$ given by $\phi(a) =
(\phi_n(a))_{n=1}^\infty$. Since for every $x= (x_n)_{n=1}^\infty\in
\mathcal{D}$, $\norm{x} = \lim_{\F} \norm{x_n}$ passing to the limit
in the inequalities $\norm{\phi_n^{(m)}([a_{ij}]_{ij})} \le r \norm{
[a_{ij}]_{ij} }$ for $[a_{ij}]_{ij} \in M_m(\A)$ and
$\norm{(\phi_n^{-1})^{(m)}([b_{ij}]_{ij})} \le r \norm{
[b_{ij}]_{ij} }$ for $[b_{ij}]_{ij} \in M_n(\B)$ we get
$\phi^{(m)}([a_{ij}]_{ij})\le \norm{[a_{ij}]_{ij}}$ and
$\norm{(\phi^{-1})^{(m)}([\phi(a_{ij})]_{ij})} \le r \norm{
[\phi(a_{ij})]_{ij}}$. Thus $\phi\in \Phi_r(\A)$.

We will show that $\A_{\phi_n} \subseteq \A_\phi$ for all $n\ge 1$.
Since $\A_{\phi_n}$ is a linear span of those $a\in \A$ such that
$\phi_n(a)^* = \phi_n(a)$ it is suffices to show for such $a\in \A$
that $\phi(a)$ is self-adjoint. Recall that an element $x$ in a
$C\sp*$-algebra is self-adjoint if and only if for every
$\alpha\in\mathbb{R}$, $\norm{exp(i \alpha x)} = 1$ (see~\cite[Prop.
44.1]{DoranBelfi}). Since $\phi_n(a)^* =  \phi_n(a)$ and
$\A_{\phi_k}$ is an increasing family  we have $\norm{exp(i \alpha
\phi_m(a))} = 1$ for all $m\ge n$. Thus
$$\norm{exp(i \alpha \phi(a))} =\lim_{\F}\norm{exp(i \alpha
\phi_m(a))} = 1.  $$ Hence $\phi(a)^* = \phi(a)$ and $\A_{\phi_n}
\subseteq \A_\phi$.
\end{proof}

If $\A$ is an operator algebra in $\BH$ and $\phi:\A\to \B$ lies in
$\Phi(\A)$ then $\phi^{-1}: \B\cap \B^*\to \A_\phi$ is a cb
homomorphism of $C\sp*$-algebra $ \B\cap \B^*$. By the famous
Haagerup theorem (see~\cite{Haagerup}) there is an invertible $S\in
\BH$ such that $S^{-1}\phi(\cdot)S$ is a $*$-representation and
hence $S^{-1}\A_\phi S$ is a $C\sp*$-subalgebra in $\BH$. Thus for
every $\phi\in \Phi(\A)$ there exists an invertible $S\in \BH$ such
that $\A_\phi\subseteq \A_{Ad S}$. In particular every
$\Phi$-maximal  or $\Phi_r$-maximal subalgebra  $\A_\phi$ is of the
form $\A_{Ad S}$. However the subalgebra $\Complex e$ in the
following example is of the form $\A_\phi$ with $\phi\in \Phi(\A)$
but not of the form $\A_{Ad S}$.

\begin{example}\label{examp}
Let $\A= \set{\left(
                \begin{array}{cc}
                  \alpha & \beta \\
                  0 & \gamma \\
                \end{array}
              \right)| \alpha, \beta, \gamma \in \Complex
}$ be an operator algebra in $M_2(\Complex)$. For every $c\in
\Complex\setminus \set{0}$ consider the subalgebra $$V_c=
\set{\left(
                                                            \begin{array}{cc}
                                                              y & x \\
                                                              0 & y+c x \\
                                                            \end{array}
                                                          \right)
| x, y\in \Complex}.$$ Let $V_0$ denote the algebra of diagonal
matrices in $M_2$.  The algebras $V_c$ are clearly commutative.
Since $\A$ is not semisimple $\A$ is not of the form $\A_\phi$.
Hence any two-dimensional subalgebra of the form $\A_\phi$ ($\phi\in
\Phi(\A)$) is maximal. It is easy to show that the class of  maximal
subalgebras $\A_\phi$ coincides with the class $\set{V_c| \
c\in\Complex}$.
\end{example}
\begin{lemma}
For every $c\in \Complex$ and every $\phi\colon \A \to \B\subseteq \BOP(\h)$ such that
$\A_{\phi} = V_c$ we have $$ \norm{\phi} \norm{(\phi)^{-1}}
\ge \frac{1}{\abs{c}}.$$
\end{lemma}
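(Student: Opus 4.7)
The plan is to use the constraint $\A_\phi = V_c$ to pin down the image $\phi(e_{22})$ and $\phi(e_{12})$ almost completely, and then read off the bound from the norms of two basis elements. Throughout I assume $c \neq 0$; for $c=0$ the right-hand side is $\infty$ and (taking $\phi = \mathrm{id}$) the claim is vacuous, so presumably $c \neq 0$ is intended.

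First, observe that in $V_c$ the element $q := c^{-1}e_{12} + e_{22}$ satisfies $q^2 = q$, and together with $e$ spans $V_c$. Since $\phi(V_c) = \B \cap \B^*$ is a $2$-dimensional unital commutative $*$-subalgebra of $\BOP(\h)$, any nontrivial idempotent in it is a self-adjoint projection. Hence $P := \phi(q)$ is an orthogonal projection, and the relation $q = c^{-1}e_{12} + e_{22}$ becomes
\[ \phi(e_{12}) \;=\; c(P - E), \qquad E := \phi(e_{22}). \]

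Second, translate the matrix-unit relations $E^2 = E$, $aE = a$, $Ea = 0$ (where $a := \phi(e_{12})$) via $a = c(P-E)$ into constraints between $E$ and $P$: one obtains $EP = E$ and $PE = P$, whence $E(I-P) = 0$. With respect to the decomposition $\h = P\h \oplus (I-P)\h$ this forces the block forms
\[ E \;=\; \begin{pmatrix} I & 0 \\ L & 0 \end{pmatrix}, \qquad a \;=\; c\begin{pmatrix} 0 & 0 \\ -L & 0 \end{pmatrix}, \]
for some bounded $L\colon P\h \to (I-P)\h$. Injectivity of $\phi$ rules out $a = 0$, so $L \neq 0$.

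Third, a direct computation gives $\|E\|^2 = \|E^*E\| = \|I + L^*L\| = 1 + \|L\|^2$ and $\|a\| = |c|\,\|L\|$. Since $\|e_{22}\| = \|e_{12}\| = 1$ in $\A$,
\[ \|\phi\| \;\geq\; \|E\| \;=\; \sqrt{1 + \|L\|^2}, \qquad \|\phi^{-1}\| \;\geq\; \frac{\|e_{12}\|}{\|a\|} \;=\; \frac{1}{|c|\,\|L\|}. \]
Multiplying and using $\sqrt{1 + \|L\|^2} \geq \|L\|$ yields $\|\phi\|\,\|\phi^{-1}\| \geq 1/|c|$, as required. The only nontrivial step is extracting the block form of $E$ from the multiplicative relations; once that is in place, the norm bounds obtained from the two test elements $e_{22}$ and $e_{12}$ suffice.
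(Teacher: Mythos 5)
Your proof is correct and takes essentially the same route as the paper: the paper notes that $A=\phi(e_{12}+ce_{22})$ is normal with two--point spectrum $\set{0,c}$ (equivalently, that your $P=\phi(q)=A/c$ is an orthogonal projection, which it deduces from $A^*\in\phi(V_c)$ rather than from your observation that idempotents in a commutative $C\sp*$-algebra are self-adjoint), derives the same block forms of $\phi(e_{22})$ and $\phi(e_{12})$ with respect to the decomposition of $\h$ induced by that projection, and gets the bound by testing $\phi$ on $e_{22}$ and $\phi^{-1}$ on $\phi(e_{12})$, exactly as you do. Your handling of $c=0$ (the statement is meant for $c\neq 0$, since $\phi=\mathrm{id}$ realizes $V_0$) and the slightly sharper estimate $\norm{\phi}\ge\sqrt{1+\norm{L}^2}$ are fine but inessential differences.
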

\begin{proof}
Note that $\A$ as a linear space is $V_c\oplus \Complex \eta$ where
$\eta = \left(
          \begin{array}{cc}
            0 & 1 \\
            0 & 0 \\
          \end{array}
        \right)
$ and $V_c$ is a linear span of $I$ and $\xi = \left(
                                                 \begin{array}{cc}
                                                   0 & 1 \\
                                                   0 & c \\
                                                 \end{array}
                                               \right)
$. Denote $A = \phi(\xi)$, $X =  \phi(\eta)$. Since
$\A_{\phi} = V_c$, $\phi(V_c)$ is a $C\sp*$-algebra. Hence
$A^*\in \phi(V_c)$. From this follows that $A^*$ is a linear
combination of $I$ and $A$. Thus $A$ is a normal operator with
two-point spectrum $\set{0, c}$. It is also easy to check the
following relations $$ X^2 = 0, X A = c X, A X =0 . $$
Thus there are Hilbert spaces $\h_1$, $\h_2$ and  a unitary operator
$U:\h_1\oplus \h_2\to\h$ such that
$$ U^* A U = \left(
                  \begin{array}{cc}
                    0 & 0 \\
                    0 & c I \\
                  \end{array}
                \right), U^* X U = \left(
                  \begin{array}{cc}
                    0 & Y \\
                    0 & 0 \\
                  \end{array}
                \right).
$$ In the above matrices zero denotes zero operators between $\h_i$ and $\h_j$ and
$Y:\h_2\to\h_1$. We have  $$\phi(\left(
                                   \begin{array}{cc}
                                     \alpha & \beta \\
                                     0 & \gamma I\\
                                   \end{array}
                                 \right)
)= U \left(
         \begin{array}{cc}
           \alpha & (\beta - \frac{\gamma-\alpha}{c})Y \\
           0 & \gamma \\
         \end{array}
       \right)
 U^*.$$ Thus $$\norm{\phi} \ge \norm{\left(
                                           \begin{array}{cc}
                                             0 & \frac{1}{c}Y \\
                                             0 & I \\
                                           \end{array}
                                         \right)
}\ge \frac{1}{\abs{c}} \norm{Y}.$$ Taking $\alpha= \gamma= 0,
\beta=1$ we obtain $$\norm{\phi^{-1}}\ge \norm{\left(
                                           \begin{array}{cc}
                                             0 & Y \\
                                             0 & 0 \\
                                           \end{array}
                                         \right)
}^{-1}= \norm{Y}^{-1}.$$ Hence $\norm{\phi}\norm{\phi^{-1}}\ge
\frac{1}{\abs{c}}$.
\end{proof}
In the following example we show that the arguments used to prove
Theorem~\ref{ultra} could not be carried out in the case of
$\Phi$-maximal subalgebras.
\begin{example}
Let $\B= \oplus_{n=1}^\infty \A_n$ where $\A_n$ is a copy of $\A$
from Example~\ref{examp}. Let $\mathcal{D}$ be $\Complex I$.
Consider the increasing sequence of subalgebras
$$
\B_n= V_1\oplus V_{1/2} \oplus\ldots \oplus
V_{1/n}\oplus \Dmc\oplus\Dmc \ldots.
$$
Let $\phi_{1/n}$ be such that
$\B_{\phi_{1/n}} = V_{1/n}$ and $\B_{\phi_0} = \mathcal{D}$. Taking
$$
\psi_n = \phi_1\oplus \phi_{1/2}\oplus \ldots \oplus \phi_{1/n}
\oplus \phi_0 \oplus \phi_0 \ldots
$$  we see that $\psi_n \in
\Phi(\B)$ and $\B_{\psi_n} = \B_n$. But there is no $\tau\in
\Phi(\B)$ such that $\B_{\psi_n}\subseteq \B_\tau$ for all $n$.
Indeed otherwise denote $\tau_{1/n}$ the restriction of $\tau$ on
$\A_n$. Then $\tau_{1/n}\in \Phi(\A_n)$ and $(\A_n)_{\tau_{1/n}} =
V_{1/n}$. The letter equality follows from the fact  that $ V_{1/n}
\subseteq (\A_n)_{\tau_{1/n}}$ (since $\B_n\subseteq \B_\tau$) and
maximality of  $V_{1/n}$. Thus
$\norm{\tau_{1/n}}\norm{\tau_{1/n}^{-1}}\ge n$ and $\tau\not\in
\Phi(\B)$.
\end{example}

The property of $\Phi$-maximal subalgebras stated in the following
proposition is conjectured to be characteristic for $\Phi$-maximal
subalgebras.

\begin{proposition} Let $\B$ be an operator algebra.
Suppose that $\tau:\B\to \BOP(\widetilde{\h})$  is $\Phi$-maximal
(or $\Phi_r$-maximal) and a homomorphism
 $\rho:\B\to \BH$ lies in $\Phi(\B)$ (resp.
$\rho\in \Phi_r(\B)$) and that $\tau$ dilates $\rho$ (i.e.
$\h\subseteq \widetilde{\h}$  and $\rho(b) = P_{\h}\tau(b)|_\h$ for
all $b\in \B$). Then $\tau(\B_\rho)$ leaves $\h$ invariant.
\end{proposition}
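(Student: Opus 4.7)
The plan is to use $\Phi$-maximality of $\tau$ to collapse $\B_\rho$ onto $\B_\tau$, and then to upgrade the automatic semi-invariance of $\h$ under $\tau(\B)$ to full invariance by exploiting the $*$-structure that becomes available on $\B_\tau$.

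The crucial step, and what I expect to be the main obstacle, is the identity $\B_\rho = \B_\tau$. The inclusion $\B_\tau \subseteq \B_\rho$ is automatic for any dilation: if $b \in \B_\tau$ and $\tau(b)^* = \tau(c)$, then compressing to $\h$ gives $\rho(b)^* = P_\h\tau(b)^*|_\h = P_\h\tau(c)|_\h = \rho(c) \in \rho(\B)$, so $b \in \B_\rho$. Since $\rho$ itself lies in $\Phi(\B)$ (respectively $\Phi_r(\B)$) and $\B_\rho \supseteq \B_\tau$, the $\Phi$-maximality (respectively $\Phi_r$-maximality) of $\tau$ rules out strict inclusion and forces equality. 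The delicate point is simply recognizing that $\rho$ itself participates in the family defining maximality; once this observation is made the reduction is immediate.

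Once $\B_\tau = \B_\rho$, I would define an involution $b \mapsto b^\sharp$ on $\B_\tau$ by $\tau(b^\sharp) = \tau(b)^*$ (well defined by injectivity of $\tau$), turning $\B_\tau$ into a unital $*$-algebra and $\tau|_{\B_\tau}$ into a $*$-homomorphism into $\BOP(\widetilde{\h})$. The assumption that $\rho$ is a homomorphism is equivalent to semi-invariance of $\h$, i.e.\ the identity $P_\h\tau(b_1)(I - P_\h)\tau(b_2)P_\h = 0$ for all $b_1, b_2 \in \B$. Setting $b_1 = b \in \B_\tau$ and $b_2 = b^\sharp$ and writing $A = P_\h\tau(b)(I - P_\h)$, this rearranges to $AA^* = 0$, so $A^* = (I - P_\h)\tau(b)^*P_\h = 0$ and hence $\tau(b)^*\h \subseteq \h$. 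Since $\B_\tau$ is closed under $\sharp$, applying the same to $b^\sharp$ in place of $b$ yields $\tau(b)\h \subseteq \h$ for every $b \in \B_\rho$, which is the desired invariance. After the main-obstacle reduction, this last paragraph is a standard Stinespring-type semi-invariance manipulation.
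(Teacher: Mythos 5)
Your proof is correct, and while its first half coincides with the paper's, the second half takes a genuinely different route. The reduction is the same: you compress adjoints to $\h$ to get $\B_\tau\subseteq\B_\rho$ (the paper does this only for the elements $b$ with $\tau(b)$ self-adjoint, which span $\B_\tau$, while you handle arbitrary $b\in\B_\tau$ directly -- an inessential difference), and then, exactly as the paper does, you note that $\rho$ itself belongs to the family over which $\tau$ is maximal, forcing $\B_\tau=\B_\rho$. Where you diverge is the passage from semi-invariance to invariance. The paper invokes Sarason's theorem to realize $\h$ as a difference of nested $\tau(\B)$-invariant subspaces, $\h_1\oplus\h=\h_2$, writes $\tau(b)|_{\h_2}$ in the upper-triangular form $\left(\begin{smallmatrix}\tau(b)|_{\h_1} & D(b)\\ 0 & \rho(b)\end{smallmatrix}\right)$, and kills the corner $D(b)$ by observing that when $\tau(b)$ is self-adjoint its restriction to the invariant subspace $\h_2$ is self-adjoint; a spanning argument over such $b$ finishes. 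You bypass Sarason entirely: the identity $P_{\h}\tau(b_1)(I-P_{\h})\tau(b_2)P_{\h}=0$ follows immediately from multiplicativity of $\rho$, and the substitution $b_2=b^\sharp$ turns it into $AA^*=0$ with $A=P_{\h}\tau(b)(I-P_{\h})$, so the $C^*$-identity kills the corner for every $b\in\B_\tau$ at once. Your argument is more elementary and self-contained (no external citation, no block decomposition, no spanning step), whereas the paper's route exhibits the nested-invariant-subspace structure of the dilation -- extra information that this proposition does not actually require.
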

\begin{proof}
If $b\in \B$ and $\tau(b)$ is self-adjoint then the equality
$\rho(b) = P_{\h}\tau(b)|_\h$ implies that $\rho(b)$ is also
self-adjoint. Thus $b\in\B_{\rho}$. The set of all such $b$
generates $\B_\tau$ which give an inclusion $\B_\tau\subseteq
\B_\rho$. Since $\tau $ is $\Phi$-maximal the converse inclusion
also holds. Thus $\B_\tau = \B_\rho$. By Sarason's theorem
(see~\cite{Sarason})  $\h$ is a semiinvariant subspace in
$\widetilde{\h}$. Hence there are $\tau(\B)$-invariant subspaces
$\h_1$ and $\h_2$ in $\widetilde{\h}$ such that $\h_1\oplus\h
=\h_2$. With respect to this decomposition into orthogonal direct
sum $\tau|_{\h_2}$ has the following matrix $\left(
   \begin{array}{cc}
     \tau|_{\h_1}(b) & D(b) \\
     0 & \rho(b) \\
   \end{array}
 \right)
$ where $D(b)$ is a map from $\h_2$ to $\h_1$. For $b\in B_\rho$
such that $\rho(b)$ is self-adjoint $\tau|_{\h_2}$ is also
self-adjoint. Hence $D(b)=0$. From this follows that $D(B_\rho)=0$
and consequently $\tau(\B_\rho)$ leaves $\h$ invariant.
\end{proof}

\section{Appendix}
For the convenience of the reader we review some relevant facts from
noncommutative Gr\"obner bases theory (see~\cite{Ufn}).

Let $X$ be an alphabet and  $\F(X)$ be the free associative algebra
over $X$. Denote by $W$ the set of all words in $X$ including the
empty word $e$. In particular $W$ is a linear basis for $\F(X)$. For
a word  $w= x_{1}^{\alpha_1} \ldots
 x_{k}^{\alpha_k}$ (where $x_j\in X$ and $\alpha_j\in \mathbb{N}$) the length of $w$,
 denoted by $l(w)$, is defined to be $\alpha_1+\ldots +\alpha_k$.

Assume that $W$ is given an admissible ordering, i.e. a
well-ordering such that $c<d$ implies $a c b < a d b$ for all $a, b,
c, d \in W$. It is customary to use the following admissible
ordering called {\it deglex} (degree lexicographic). Fix any
well-ordering on $X$. For $c$ and $d$ in $W$ we set $c>d$ if either
$l(c)>l(d)$ or $l(c) =l(d)$ and $c$ is larger than $d$
lexicographically.

   Any  $f \in \F(X)$  is a linear combination $\sum_{i=1}^k
\alpha_k w_i$  of distinct words $w_1$, $w_2$, $\ldots$, $w_k$ with
complex coefficients $\alpha_i\not=0$ for all $i\in \{1,\ldots, k
\}$.  Let $\hat{f}$ denote  the greatest (in the fixed admissible
order) of these words, say $w_j$. The coefficient $\alpha_j$ denoted
by ${\rm lc} (f)$ is called {\it leading coefficient}. Denote
$\hat{f}- {\rm lc} (f)^{-1}f$ by $\bar{f}$. The degree of $f\in
\F(X)$, denoted by $\deg(f)$, is defined to be $l(\hat{f})$.
Elements of the free algebra $\F(X)$ can be identified with
functions $f:W\to \mathbb{C}$ with finite support via the map $f\to
\sum_{w\in W}f(w)w$. For a word $z\in W$ and an element $f\in \F(X)$
we will write $z\prec f$ if $f(z)\not= 0$.
\begin{definition}
We will say that two elements $f,g \in \F(X)$ form a composition
$w\in W$ if there are words $x,z \in W$ and a non-empty word $y\in
W$ such that $\hat{f} = x y$, $ \hat{g} = y z$ and $w=x y z$. Denote
the result of the composition ${\rm lc} (g) f z-{\rm lc} (f) x g$ by
 $(f,g)_w$.
\end{definition}
If $f$ and $g$ are as in the preceding definition then  $f= {\rm lc}
(f) x y +{\rm lc} (f) \bar{f}$, $g= {\rm lc} (g) y z +{\rm lc} (g)
\bar{g}$ and $(f,g)_w= {\rm lc} (f) {\rm lc} (g) (\bar{f} z - x
\bar{g})$. We will also say that $f$ and $g$ intersect by $y$.

\begin{definition}
A subset $S \subseteq \F(X)$ is called closed under compositions  if
for any two elements  $f$, $g \in S$ the following properties holds.
\begin{enumerate}
\item If $f\not=g$ then the word $\hat{f}$ is not a subword in
$\hat{g}$. \item  If $f$ and $g$ form a composition $w$ then there
are words $a_j$, $b_j$ $\in W$,  elements
 $f_j \in S$  and complex   $\alpha_j$ such that $(f,g)_w=\sum_{j=1}^m \alpha_ja_jf_jb_j$ and  $a_jf_jb_j < w$, for $j=1,\ldots,m$.
\end{enumerate}
\end{definition}

\begin{definition}
A set $S\subseteq \F(X)$  is called a Gr\"obner basis of an  ideal
$\mathcal{I}\subseteq \F(X)$ if for any $f\in \mathcal{I}$ there is
$s\in S$ such that $\hat{s}$ is a subword in $\hat{f}$. A Gr\"obner
basis $S$ of $\mathcal{I}$ is called minimal if no proper subset of
$S$  is a Gr\"obner basis of $\mathcal{I}$.
\end{definition}

If $S$ is closed under compositions then $S$ is a minimal Gr\"obner
basis for the ideal $\mathcal{I}$ generated by $S$ (see \cite{Bok}).
Henceforth we will consider only minimal Gr\"obner bases.  Thus we
will say that $S$ is a Gr\"obner basis of an associative algebra
$\A=\F(X)/\mathcal{I}$ if $S$ is closed under composition and
generates $\mathcal{I}$ as an ideal of $\F(X)$. It could be proven
using Zorn's lemma that for any generating set $X$ of $\A$ and any
admissible ordering there exists a Gr\"obner basis of $\A$ in
$\F(X)$.

Let $S$ be a Gr\"obner basis for $A$ and let  $\hat{S}=\{ \hat{s} |
s\in S\}$. Denote by $BW(S)$ the subset of those words in $W$ that
contain no word from $\hat{S}$ as a subword. It is a well known fact
that $BW(S)$ is a linear basis for $\A$.

If $S\subseteq\F(X)$ is closed under compositions and $\mathcal{I}$
is an  ideal generated by $S$ then each element $f+\mathcal{I}$ of
the factor algebra $\F(X)/\mathcal{I}$ is the unique linear
combination of basis vectors $\{w +\mathcal{I}\}_{w \in BW}$  $$
f+\mathcal{I} = \sum_{i=1}^n c_i (w_i+ \mathcal{I}). $$

We can define an operator ${\rm R}_S:\F(X)\to \F(X)$ by the
following rule ${\rm R}_S(f) = \sum_{i=1}^n c_i w_i$. The element
${\rm R}_S(f)$ is called a canonical form of the element $f$ in the
factor algebra $\F(X)/\mathcal{I}$.

\end{document}